\documentclass[11pt,letterpaper,reqno]{amsart}
\usepackage{amsmath}
\usepackage{amstext}
\usepackage{amssymb}
\usepackage{amsfonts}
\usepackage{enumerate}
\usepackage{mathrsfs}
\usepackage{color}
\usepackage[all]{xy}

\numberwithin{equation}{section}

\newtheoremstyle{note}
{1em}
{1em}
{}
{}
{\bfseries}
{:}
{.5em}
{}

\newtheorem{theorem}{Theorem}[section]
\newtheorem{lemma}[theorem]{Lemma}
\newtheorem{proposition}[theorem]{Proposition}
\newtheorem{corollary}[theorem]{Corollary}
\newtheorem{definition}[theorem]{Definition}

\theoremstyle{note}
\newtheorem{remark}[theorem]{Remark}

\newcommand{\lap}{{\Delta}}
\newcommand{\N}{{\mathbb{N}}}

\newcommand{\R}{{\mathbb{R}}}
\newcommand{\C}{{\mathbb{C}}}

\newcommand{\pair}[2]{{\langle #1, #2 \rangle}}

\renewcommand{\Re}{\operatorname{Re}}

\DeclareMathOperator{\vol}{Vol}

\author[J.A.~Ch\'avez-Dom\'{\i}nguez]{Javier Alejandro Ch\'avez-Dom\'{\i}nguez}
\address{Department of Mathematics,
University of Oklahoma,
Norman OK , 73019-3103 USA}

\email{jachavezd@ou.edu}
\thanks{The author was partially supported by NSF grants DMS-1400588 and DMS-1900985.}

\subjclass[2010]{ Primary: 05C35 ; Secondary: 46E35, 05C50, 35P15, 35R02 }
\keywords{Magnetic graphs; Isoperimetric constant; Sobolev inequality}

\begin{document}

\title[Isoperimetric and Sobolev inequalities for magnetic graphs]{Isoperimetric and Sobolev inequalities for magnetic graphs}

\begin{abstract}
We introduce a concept of isoperimetric dimension for magnetic graphs, that is, graphs where every edge is assigned a complex number of modulus one.
In analogy with the classical case, we show that isoperimetric inequalities imply Sobolev inequalities on such graphs.
As a first application, we show that the signed Cheeger constant behaves additively with respect to Cartesian products of graphs.
Using heat kernel techniques, we also give lower bounds for the eigenvalues of the discrete magnetic Laplacian.
\end{abstract}

\maketitle

\section{Introduction}

Many practical situations can be modeled using a (combinatorial) graph, a set of vertices where some pairs of them are joined by an edge.
The vertices correspond to the different objects of interest, and the presence of an edge indicates that two particular objects are related.
In some cases, we have more information available: not only that two objects \emph{are} related, but also \emph{how} they are related.
We will particularly look at the case where this information is given by a complex number of modulus one, giving rise to what is known as \emph{magnetic graphs}.
For example, the angular synchronization problem \cite{Singer} is to estimate $n$ unknown angles $\theta_1, \dotsc, \theta_n \in [0,2\pi)$ from a set of measurements of their offsets $\theta_k -\theta_j \mod 2\pi$;
in general, only a subset of all possible offsets are measured. The set $E$ of pairs $\{k, j\}$
for which offset measurements exist can be realized as the edge set of a graph $G = (V , E)$ with vertices corresponding to the 
angles, and edges corresponding to measurements. Each measurement can be encoded as a complex number $\sigma_{kj} = e^{i(\theta_k-\theta_j)}$ having modulus one, and observe that $\sigma_{kj} = \sigma_{jk}^{-1}$.
Another such situation is quantum mechanics on a graph, as in \cite{Lieb-Loss}.
The vertices of the graph can be thought of as locations of atoms in a solid,
and the edges correspond to electron bonds joining
the atoms; this model is known as the tight-binding model or H\"uckel model.
The argument of the complex number associated to an edge joining $u$ and $v$ has a physical interpretation, as the
integral of a magnetic vector potential from the point $u$ to the point $v$.
Since reversing the direction of integration results in a change of sign, reversing the direction of travel along an edge once again replaces the associated complex number by its multiplicative inverse. See Section \ref{sec:basic-notation} for the mathematical formalization of these ideas.

The main goal of this paper is to develop, in the context of magnetic graphs, a subject with a long history: the relationship between isoperimetric and Sobolev inequalities.
This is well known for domains in $\R^n$, and the fact that they are equivalent was shown by Maz'ya \cite{Mazya}.
The fundamental tool giving this connection is the coarea formula, a version of Cavalieri's principle which states that the integral of the length of the gradient of a function $g$ can be calculated as the integral of the measures of the level sets of $g$; see \cite[Sec. 2]{Mazya-Lec} for a nice elementary exposition.
In the graph case, Sobolev inequalities have also been obtained from isoperimetric ones through a discrete version of the coarea formula \cite{ChungYau95,Tillich,Ostrovskii-Sobolev}.
Various applications of Sobolev inequalities for classical graphs are given, for example, in \cite{Chung-Tetali} and \cite{ChungYau95}, and we prove versions of these results for magnetic graphs below.
As can be expected from the discussion above, our fundamental tool will be a version of the coarea formula in this context: we obtain it as a variation of \cite[Lemma 4.3]{LLPP}.

The rest of this paper is organized as follows.
In Section \ref{sec:basic-notation} we introduce the basic notation and notions associated to magnetic graphs.
Section \ref{sec:isoperimetry} introduces the notion of isoperimetric constants for magnetic graphs, and proves that they imply associated Sobolev inequalities.
Applications are given in the next two sections:
Section \ref{sec:products} shows that the signed Cheeger constant behaves additively with respect to Cartesian products of graphs,
whereas Section \ref{sec:heat} uses heat kernel techniques to give lower bounds for the eigenvalues of the discrete magnetic Laplacian.
The last two sections are brief remarks:
Section \ref{sec:S1-valued} indicates how to generalize the main results from Section \ref{sec:isoperimetry}, and Section \ref{sec:BL} relates our Sobolev inequalities with Balian-Low type theorems in finite dimensions from time-frequency analysis.

\section{Basic notation}\label{sec:basic-notation}

With the exception of the last proposition, the notation and definitions in this section follow \cite{LLPP}.
We always denote by $G=(V,E)$ an undirected simple finite
graph on $N$ vertices, with vertex set $V$ and edge set $E$.
We consider the edges to be unordered pairs
 $\{u,v\}$, and use the notation $u \sim v$ to indicate that $u \in V$ and $v
\in V$ are connected by an edge.
Sometimes we will need to consider oriented edges, which we denote by ordered pairs.
If $e=(u,v)$ is the oriented edge starting at $u$ and terminating at $v$, we write $\bar
e=(v,u)$ for the oriented edge with the reversed orientation.
The set of all oriented edges will be denoted by
$E^{or}:=\{(u,v), (v,u)\mid \{u,v\}\in E\}$.
To every edge $e=\{u,v\} \in E$ we associate a positive symmetric weight $w_{uv} = w_{vu} = w_e$,
and we define the weighted degree $d_u$ of a vertex $u\in V$ to be
$d_u:=\sum_{v,v\sim u}w_{uv}$.
Given a positive measure $\mu: V \to
{\R}^+$ on $V$, the \emph{maximal $\mu$-degree} of the graph
$G$ is
$$
  d_{\mu}:=\max_{u\in V}\left\{\frac{\sum_{v,v\sim u}w_{uv}}{\mu(u)}\right\}
  = \max_{u\in V}\left\{\frac{d_u}{\mu(u)}\right\}.
$$

We denote the \emph{boundary measure} of $X \subseteq V$ by
$$
|E(X,X^c)|:=\sum_{u\in X}\sum_{v\in X^c}w_{uv},
$$
where $X^c$ is the complement of $X$ in $V$.
The \emph{$\mu$-volume} of
$X$ is given by
$$
\vol_{\mu}(X):=\sum_{u\in X}\mu(u).
$$
In a slight abuse of notation, in the special case $X=V$ we will often write $\vol_\mu(G)$ instead of $\vol_\mu(V)$.

Given $k \in {\mathbb N}$, we use the standard combinatorial notation
$[k] = \{1,2,\dots,k\}$.
Throughout the paper, we will only consider the case
where the signature group $\Gamma$ is either the unit circle $S^1 = \{ z \in \C \mid |z|=1 \}$, or the cyclic group
$S_k^1:=\{\xi^{j} \mid j \in [k] \}$ of order $k$ generated by
the primitive $k$-th root of unity $\xi:=e^{2\pi i/k}\in \C$.
Below, whenever we consider a group $\Gamma$ it will always be either $S^1$ or $S^1_k$.

\begin{definition} Let $G$ be a graph and let $\Gamma \subseteq S^1$ be a group. A \emph{signature} for
  $G$ is a map $s: E^{or} \to \Gamma$ such that
  \begin{equation}\label{eq:signatureKEY}
    s(\bar e)=s(e)^{-1},
  \end{equation}
  where $s(e)^{-1}$ is the inverse of $s(e)$ in $\Gamma$. The \emph{trivial
  signature} $s \equiv 1$, is denoted by $s_1$. For an oriented edge
  $e=(u,v) \in E^{or}$, we will almost always write $s_{uv}:=s(e)$ for
  simplicity.
\end{definition}

Given a $\Gamma$-valued signature on $G$, and a $\Gamma$-valued function on the vertices of $G$,
it is easy to produce a new signature by ``conjugating'' the old one by the function on the vertices.  
This operation is called \emph{switching}.

\begin{definition}\label{def:switching} Let $G$ be a graph with $\Gamma$-valued signature $s$. For any function
  $\tau: V\to \Gamma$ we can define a new signature
  $s^{\tau}:E^{or}\to \Gamma$ as follows:
$$
    s^{\tau}(e)=\tau(u)s(e)\tau(v)^{-1} \quad \forall\, e=(u,v)\in E^{or}.
$$
  We say that the function $\tau$ is a \emph{switching function}.  Two signatures $s$
  and $s'$ are said to be \emph{switching equivalent} if there exists a
  switching function $\tau$ such that $s'=s^{\tau}$.
\end{definition}

We remark that it is easy to check that switching is in fact an equivalence relation on the
set of signatures.

\subsection{Frustration index}

Let $G$ be a finite graph with a
signature $s: E^{or} \to \Gamma\subseteq S^1$, and let
$\mathcal{C}$ be a cycle, which is a sequence $(u_1,u_2), (u_2,u_3), \cdots, (u_{l-1}, u_l), (u_l,u_1)$ of distinct edges in $E$.
Then the signature of $\mathcal{C}$ is
\begin{equation*}
s(\mathcal{C}) := s_{u_1u_2}s_{u_2,u_3}\cdots
s_{u_{l-1}u_l}s_{u_lu_1}\in \Gamma.
\end{equation*}
Note that the signature of a
cycle is switching invariant.

\begin{definition}
  A signature $s:E^{or}\to \Gamma$ is called \emph{balanced} if the
  signature of every cycle of $G$ is equal to 1.
\end{definition}

We will also say that the graph $G$ or a subgraph of
$G$ is balanced if the signature restricted to it is balanced.
Since the signature of a cycle is switching invariant, the property of being
balanced is also switching invariant. The following is an important
characterization of being balanced using switching operations \cite[Corollary
    3.3]{Zaslavsky82}.

\begin{proposition}\label{Pro:switching lemma}
A signature $s: E^{or}\to \Gamma\subseteq S^1$ is balanced if and only if it is switching equivalent to the trivial signature $s_{1}$.
\end{proposition}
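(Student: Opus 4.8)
The plan is to prove the two implications separately, with the substantive work lying in the direction ``balanced $\Rightarrow$ switching equivalent to $s_1$'' and carried out via a spanning-tree construction.

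The reverse implication is immediate: if $s=s_1^{\tau}$ for some switching function $\tau\colon V\to\Gamma$, then since the signature of a cycle is switching invariant and every cycle has signature $1$ under the trivial signature $s_1$, the same holds under $s$; hence $s$ is balanced.

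For the forward implication, I would first reduce to the case that $G$ is connected, since a signature is balanced (respectively, switching equivalent to $s_1$) if and only if its restriction to each connected component is, and a switching function may be chosen independently on each component. Assuming $G$ connected, fix a spanning tree $T$ and a root $r\in V$, and define $\tau\colon V\to\Gamma$ by $\tau(r)=1$ and, for $v\neq r$, $\tau(v):=s_{u_0u_1}s_{u_1u_2}\cdots s_{u_{k-1}u_k}$, where $r=u_0,u_1,\dots,u_k=v$ is the unique path in $T$ from $r$ to $v$. This is well defined because $T$ is a tree, and takes values in $\Gamma$ because $\Gamma$ is a group. By construction $\tau(v)=\tau(u)s_{uv}$ for every tree edge, so $s^{\tau}(u,v)=\tau(u)s_{uv}\tau(v)^{-1}=1$ on all oriented tree edges.

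It remains to verify $s^{\tau}\equiv 1$ on the non-tree edges, and this is the step I expect to be the only real obstacle. Here I would use that $s^{\tau}$ is again a balanced signature, because cycle signatures are switching invariant. Given a non-tree edge $\{u,v\}$, adjoining it to $T$ produces a unique fundamental cycle $\mathcal{C}$ consisting of $\{u,v\}$ together with the tree path between $u$ and $v$; in a simple graph this is a genuine cycle in the sense of our definition (the tree path has length at least $2$, so $\mathcal{C}$ has at least three distinct edges). Every tree edge occurring in $\mathcal{C}$ contributes $1$ to $s^{\tau}(\mathcal{C})$, so $s^{\tau}(\mathcal{C})$ equals $s^{\tau}(u,v)$ up to an inverse dictated by the orientation; since $s^{\tau}$ is balanced, $s^{\tau}(u,v)=1$, and then $s^{\tau}(v,u)=s^{\tau}(u,v)^{-1}=1$ as well. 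Thus $s^{\tau}\equiv 1$, i.e.\ $s^{\tau}=s_1$, so $s$ is switching equivalent to $s_1$. Everything outside this last step is routine bookkeeping with the convention $s(\bar e)=s(e)^{-1}$ and the definition of switching, and I expect the cleanest route to the last step to be the switching-invariance argument above rather than a direct walk around each fundamental cycle.
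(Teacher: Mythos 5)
The paper does not prove Proposition~\ref{Pro:switching lemma} itself; it simply cites it as \cite[Corollary~3.3]{Zaslavsky82}, so there is no in-paper proof to compare against. Your argument is correct: the reverse direction follows immediately from switching-invariance of cycle signatures, the reduction to a connected graph is legitimate, the spanning-tree potential $\tau$ is well defined because $T$ is a tree, the identity $s^{\tau}=1$ on tree edges follows from $\tau(v)=\tau(u)s_{uv}$ together with commutativity of $\Gamma\subseteq S^1$ and the convention $s(\bar e)=s(e)^{-1}$, and the fundamental-cycle step is sound since (again by switching-invariance) $s^{\tau}$ is balanced whenever $s$ is, forcing $s^{\tau}$ to be $1$ on each non-tree edge as the only non-trivial factor of a fundamental cycle's signature. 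Your remark that fundamental cycles in a simple graph have length at least $3$ is exactly the needed sanity check that the cited definition of a cycle applies. This spanning-tree/potential argument is the standard route and is, to the best of my knowledge, essentially the one used in Zaslavsky's original proof, so you have not taken a genuinely different path; you have merely supplied the proof that the paper outsources to a reference.
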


In fact, more is true \cite[Lemma 2.1]{Lieb-Loss}: if two signatures take the same values on all cycles, then the signatures are switching equivalent.  

If a signature is not balanced on a subset, we would like to quantify how far it is from being balanced.
For that purpose,
the following frustration index was defined in \cite[Defn. 3.4]{LLPP}.

\begin{definition}\label{def:frustration index}
  Let $G$ be a finite graph with $\Gamma$-valued signature $s$ and $V_1\subseteq V$ a nonempty subset
  with induced subgraph~$(V_1,E_1)$. The \emph{frustration index} $\iota^s(V_1)$
  of $V_1$ is defined as
  \begin{align}\label{eq:frustration index}
    \iota^s(V_1):&=\min_{\tau: V_1\to
      \Gamma}\sum_{\{u,v\}\in E_1}w_{uv}|\tau(u)-s_{uv}\tau(v)| \\
      &=\min_{\tau: V_1\to \Gamma}\sum_{\{u,v\}\in E_1}w_{uv}|1-\tau(u)^{-1}s_{uv}\tau(v)| \notag
  \end{align}
\end{definition}

It is clear from the definition that the frustration index of a set is switching invariant, and also that
 the frustration index is a measure, in an $\ell_1$ sense, of how close we can get to the trivial signature using the switching operation.
In particular, according to Proposition \ref{Pro:switching
 lemma}, we have that
  $\iota^s(V_1)=0$ if and only if  the subgraph induced by  $V_1$  is balanced.

Note that the frustration index of any signature on a tree is always zero, since one can easily construct inductively a switching function that gives equivalence with the trivial signature.
 As an example we now calculate the frustration index of a signature on a cycle with unit weights.

\begin{proposition}\label{prop:frustration-of-cycle}
Let $\mathcal{C}=(V,E)$ be a graph which is a cycle consisting of the sequence of distinct edges 
$(u_1,u_2), (u_2,u_3), \cdots, (u_{n-1}, u_n), (u_n,u_1)$, each having weight $1$, and let $s$ be an $S^1_k$-valued (or $S^1$-valued) signature for $\mathcal{C}$. Then
$$
\iota^s(\mathcal{C}) = \big| 1 - s(\mathcal{C}) \big|.
$$
\end{proposition}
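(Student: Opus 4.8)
The plan is to exploit the switching invariance of the frustration index to reduce to a particularly simple signature, and then control the resulting minimization with the triangle inequality in $\C$.

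First I would pass to a convenient representative of the switching class of $s$. The spanning path $(u_1,u_2),(u_2,u_3),\dots,(u_{n-1},u_n)$ of $\mathcal{C}$ is a tree, hence balanced, and (exactly as in the inductive construction mentioned above for trees) there is a switching function $\tau_0\colon V\to\Gamma$ with $\tau_0(u_1)=1$ and $\tau_0(u_{i+1})=\tau_0(u_i)s_{u_iu_{i+1}}$, so that the switched signature $s':=s^{\tau_0}$ satisfies $s'_{u_iu_{i+1}}=1$ for $1\le i\le n-1$. Since the signature of a cycle is switching invariant and all the path edges now carry the value $1$, the closing edge must satisfy $s'_{u_nu_1}=s(\mathcal{C})$. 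As the frustration index is itself switching invariant, it suffices to compute $\iota^{s'}(\mathcal{C})$; here one should also note that the summand $w_{uv}|\tau(u)-s_{uv}\tau(v)|$ does not depend on the orientation chosen for $\{u,v\}$, because $|s_{uv}|=1$ and $s_{vu}=s_{uv}^{-1}$.

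For the lower bound I would apply the triangle inequality in $\C$: for every $\tau\colon V\to\Gamma$,
\begin{align*}
\sum_{i=1}^{n-1}\big|\tau(u_i)-\tau(u_{i+1})\big| + \big|\tau(u_n)-s(\mathcal{C})\tau(u_1)\big|
&\ge \Big|\, \sum_{i=1}^{n-1}\big(\tau(u_i)-\tau(u_{i+1})\big) + \big(\tau(u_n)-s(\mathcal{C})\tau(u_1)\big)\,\Big| \\
&= \big|\tau(u_1)\big|\cdot\big|1-s(\mathcal{C})\big| = \big|1-s(\mathcal{C})\big|,
\end{align*}
where the inner sum telescopes and $|\tau(u_1)|=1$. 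For the matching upper bound I would simply test with the constant switching function $\tau\equiv 1$: every path term then vanishes and we are left with exactly $|1-s(\mathcal{C})|$. Since $1\in\Gamma$ in both the $\Gamma=S^1$ and the $\Gamma=S^1_k$ cases, the same test function works for either signature group, and the lower bound above was established for all $\Gamma$-valued $\tau$; combining the two bounds gives $\iota^{s'}(\mathcal{C})=|1-s(\mathcal{C})|$, hence $\iota^{s}(\mathcal{C})=|1-s(\mathcal{C})|$.

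There is no substantive obstacle here; the only point requiring attention is bookkeeping — keeping the orientation conventions straight so that the closing edge genuinely picks up $s(\mathcal{C})$ rather than its inverse, and verifying the orientation-independence of the summand so that the switching step is legitimate.
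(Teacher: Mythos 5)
Your argument is correct and is essentially the same as the paper's: the preliminary switching by $\tau_0$ (which trivializes the path edges and concentrates $s(\mathcal{C})$ on the closing edge) is exactly the multiplication by $\sigma_1\cdots\sigma_{j-1}$ that the paper performs inside the absolute values in its lower-bound estimate, and the test function $\tau\equiv 1$ after switching corresponds to the paper's explicit choice $\tau(u_j)=\sigma_{j-1}^{-1}\cdots\sigma_1^{-1}$. The reorganization around switching invariance is a cleaner presentation but not a different proof.
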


\begin{proof}
For simplicity let us denote $\sigma_j = s_{u_ju_{j+1}}$, with the convention $u_{n+1}= u_1$.
Define $\tau : V \to S^1_k$ by $\tau(u_1) = 1$, and for $j>1$ let $\tau(u_j) = \sigma_{j-1}^{-1} \cdots \sigma_1^{-1}$.
Then
\begin{multline*}
\sum_{j=1}^n |\tau(u_j) - \sigma_j\tau(u_{j+1})| \\= |\tau(u_n) - \sigma_n\tau(u_1)| = \big| \sigma_{n}^{-1} \cdots \sigma_1^{-1} - 1 \big| = \big| 1 - \sigma_1 \cdots \sigma_n \big|.
\end{multline*}
On the other hand, for any $\tau : V \to S^1_k$, by the triangle inequality,
\begin{multline*}
\sum_{j=1}^n \big|\tau(u_j) - \sigma_j\tau(u_{j+1})\big| 
= \sum_{j=1}^n \big|\sigma_1 \cdots \sigma_{j-1}\tau(u_j) - \sigma_1 \cdots \sigma_{j-1}\sigma_j\tau(u_{j+1})\big| \\
\ge \big| \tau(u_1) - \sigma_1 \cdots \sigma_n \tau(u_1) \big| = \big| 1 - \sigma_1 \cdots \sigma_n \big|.
\end{multline*}
\end{proof}

\section{From isoperimetric to Sobolev inequalities}\label{sec:isoperimetry}

The concept of isoperimetric dimension of a graph was introduced in \cite{ChungYau95}, as a ``finite-dimensional'' variation of the famous Cheeger constant,
and it was used to proved Sobolev inequalities for graphs.
The main point of this paper is to do an analogous study for graphs with unbalanced  signatures,
 building upon the definition of Cheeger constants in this context from \cite{LLPP}.

\begin{definition}\label{def:isoperimetric constant}
  Let $G$ be a finite graph with a signature $s$. 
  We say that $(G,s)$ has \emph{isoperimetric dimension $\delta$} with \emph{isoperimetric constant} $c_\delta$
if for every nonempty subset $V_1$ of $V$ we have
  \begin{equation}\label{eq:isoperimetric constant}
    \iota^s(V_1)+|E(V_1, V_1^c)| \ge c_\delta {\vol_{\mu}(V_1)} ^{\frac{\delta-1}{\delta}}.
  \end{equation}
\end{definition}

Note that the case $\delta=\infty$ is precisely the ($1$-way) Cheeger constant from \cite[Def. 3.5]{LLPP}.
The reader should be warned that this does not directly correspond to the classical Cheeger constant for (non-magnetic) graphs;
that would be the $2$-way Cheeger constant.

\subsection{The coarea inequality}

This subsection is an adaptation of \cite[Lemmas 4.2 and 4.3]{LLPP}.
We start by setting up the notation involved.
Let $B_r(0):=\{z\in \mathbb{C}\mid |z|< r\}$ be the open disk in $\C$ with center $0$ and radius
$r$.
For $\theta\in [0, 2\pi)$ and $k \in {\N}$, we define the
following $k$ disjoint sectorial regions
\begin{equation}\label{eq:notationQ}
  Q_{j}^{\theta}:=\left\{re^{i\alpha}\in \overline{B_1(0)}\left|\,\, r\in (0,1], \alpha\in \left[\theta+\frac{2\pi j}{k}, \theta+\frac{2\pi (j+1)}{k}\right)\right.\right\},
\end{equation}
where $j=0,1,\ldots, k-1$. Then for any $t\in (0,1]$, we define the function
$Y_{t, \theta}: \overline{B_1(0)}\to \mathbb{C}$ as
\begin{equation}
  Y_{t, \theta}(z):=\left\{
    \begin{array}{ll}
      \xi^j, & \hbox{if $z\in Q_j^{\theta}\setminus B_{t}(0)$,} \\
      0, & \hbox{if $z\in B_{t}(0)$,}
    \end{array}
  \right.
\end{equation}
where $\xi$ denotes the $k$-th primitive root of unity.

\smallskip

The following lemma is a variation of \cite[Lemma 4.2]{LLPP}.

\begin{lemma}\label{lemma:key}
  For any two points $z_1, z_2\in \overline{B_1(0)}$, we have
 $$
    \frac{1}{2\pi}\int_0^{2\pi}\int_0^1 \left| Y_{t, \theta}(z_1)-
      Y_{t, \theta}(z_2) \right|\, dt\, d\theta\, \leq \,
    3\, |z_1-z_2|.
$$
\end{lemma}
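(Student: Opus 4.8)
The plan is to compute the inner integral over $t$ exactly and then reduce the average over $\theta$ to a one-variable estimate. Write $z_1=r_1e^{i\alpha_1}$, $z_2=r_2e^{i\alpha_2}$, and assume without loss of generality $r_1\le r_2$, since both the integral and the right-hand side are symmetric in $z_1,z_2$. Fix $\theta$, and for $z_1,z_2\neq 0$ let $j_i(\theta)\in\{0,\dots,k-1\}$ be the index with $z_i\in Q^{\theta}_{j_i(\theta)}$, so $Y_{t,\theta}(z_i)=\xi^{j_i(\theta)}$ when $t\le r_i$ and $Y_{t,\theta}(z_i)=0$ when $t>r_i$. Splitting $(0,1]$ at $r_1$ and $r_2$, the integrand $|Y_{t,\theta}(z_1)-Y_{t,\theta}(z_2)|$ equals $|\xi^{j_1(\theta)}-\xi^{j_2(\theta)}|$ on $(0,r_1]$, equals $1$ on $(r_1,r_2]$, and is $0$ on $(r_2,1]$; hence
\begin{equation*}
\int_0^1\big|Y_{t,\theta}(z_1)-Y_{t,\theta}(z_2)\big|\,dt=r_1\big|\xi^{j_1(\theta)}-\xi^{j_2(\theta)}\big|+(r_2-r_1).
\end{equation*}
This formula remains valid when $z_1=0$ (with $r_1=0$), and the claim is trivial if both points are $0$.

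Next I would average in $\theta$. The contribution of $r_2-r_1=\big||z_2|-|z_1|\big|\le|z_1-z_2|$ is immediate, so the whole question is to bound $\tfrac{1}{2\pi}\int_0^{2\pi}\big|\xi^{j_1(\theta)}-\xi^{j_2(\theta)}\big|\,d\theta$. The key combinatorial point is that, as $\theta$ runs over $[0,2\pi)$, the residue $j_1(\theta)-j_2(\theta)\bmod k$ takes only two consecutive values: setting $\psi:=(\alpha_1-\alpha_2)\bmod 2\pi\in[0,2\pi)$ and $\tfrac{k\psi}{2\pi}=q+f$ with $q\in\{0,\dots,k-1\}$ and $f\in[0,1)$, the residue equals $q$ for a set of $\theta$ of measure $(1-f)2\pi$ and equals $q+1\bmod k$ for the complementary set of measure $f\cdot 2\pi$. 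This is because $j_i(\theta)=\big\lfloor\tfrac{k(\alpha_i-\theta)}{2\pi}\big\rfloor\bmod k$, and for real $a,b$ the function $u\mapsto\lfloor a-u\rfloor-\lfloor b-u\rfloor$ takes the value $\lceil a-b\rceil$ on a subset of $[0,1)$ of measure $\{a-b\}$ and the value $\lfloor a-b\rfloor$ elsewhere. Since $|\xi^{j_1(\theta)}-\xi^{j_2(\theta)}|=|1-\xi^{\,j_1(\theta)-j_2(\theta)}|$, we obtain
\begin{equation*}
\frac{1}{2\pi}\int_0^{2\pi}\big|\xi^{j_1(\theta)}-\xi^{j_2(\theta)}\big|\,d\theta=(1-f)\,|1-\xi^{q}|+f\,|1-\xi^{q+1}|.
\end{equation*}

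To close the argument I would use $|1-\xi^{m}|=2\sin(\pi m/k)$ for $m\in\{0,\dots,k\}$ and the concavity of $\sin$ on $[0,\pi]$:
\begin{equation*}
(1-f)\,|1-\xi^{q}|+f\,|1-\xi^{q+1}|
=2\Bigl[(1-f)\sin\tfrac{\pi q}{k}+f\sin\tfrac{\pi(q+1)}{k}\Bigr]
\le 2\sin\tfrac{\pi(q+f)}{k}=2\sin\tfrac{\psi}{2},
\end{equation*}
together with the law of cosines $|z_1-z_2|^2=(r_2-r_1)^2+4r_1r_2\sin^2(\psi/2)\ge 4r_1^2\sin^2(\psi/2)$, which gives $2r_1\sin(\psi/2)\le|z_1-z_2|$. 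Combining everything,
\begin{multline*}
\frac{1}{2\pi}\int_0^{2\pi}\!\!\int_0^1\big|Y_{t,\theta}(z_1)-Y_{t,\theta}(z_2)\big|\,dt\,d\theta
=r_1\cdot\frac{1}{2\pi}\int_0^{2\pi}\big|\xi^{j_1(\theta)}-\xi^{j_2(\theta)}\big|\,d\theta+(r_2-r_1)\\
\le|z_1-z_2|+|z_1-z_2|=2|z_1-z_2|\le 3|z_1-z_2|,
\end{multline*}
so the stated inequality holds, in fact with the sharper constant $2$. The only genuinely non-routine step is the claim about the two values (and their measures) of $j_1(\theta)-j_2(\theta)$; an alternative way around it is to evaluate $\tfrac{1}{2\pi}\int_0^{2\pi}|\xi^{j_1(\theta)}-\xi^{j_2(\theta)}|\,d\theta$ directly, using that the integrand is $\tfrac{2\pi}{k}$-periodic in $\theta$, so that the average is an integral over a single sector's width of angles.
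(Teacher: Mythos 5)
Your proof is correct, and it in fact establishes the inequality with the sharper constant $2$ in place of $3$. The route diverges from the paper's after the common first step (evaluating the inner $t$-integral to get $r_1\,|\xi^{j_1}-\xi^{j_2}|+(r_2-r_1)$). The paper parametrizes by the non-reflex angle $\alpha\in[0,\pi]$ between the rays, locates it in an interval $[\tfrac{2\pi l}{k},\tfrac{2\pi(l+1)}{k})$, writes the $\theta$-average explicitly as a convex combination of $|1-\xi^l|$ and $|1-\xi^{l+1}|$, and then splits into the cases $l>0$ and $l=0$; the loss in the constant comes from using the crude bound $|1-\xi^{l+1}|\le 2|1-\xi^l|$ in the first case. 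You instead identify the distribution of $j_1(\theta)-j_2(\theta)\bmod k$ cleanly via the floor-function identity (the two adjacent residues $q,q+1$ with weights $1-f,f$), then avoid any case split by using concavity of $\sin$ on $[0,\pi]$ to collapse the convex combination to $2\sin(\psi/2)$, and close with the law-of-cosines estimate $2r_1\sin(\psi/2)\le|z_1-z_2|$ together with $r_2-r_1\le|z_1-z_2|$. This is cleaner and buys the better constant; notably it makes the $S^1_k$-valued bound match the constant $2$ obtained in the paper's $S^1$-valued analogue (Lemma~\ref{lemma:keyS1}), which would then improve the constants in Theorem~\ref{thm:Sobolev-p=1} and its corollaries to agree with the $S^1$ case. (One could even squeeze out $\sqrt{2}$ from your two estimates, since $(r_2-r_1)^2+(2r_1\sin(\psi/2))^2\le|z_1-z_2|^2$, though $2$ already suffices.) The only steps to be careful about are exactly the ones you flag: the measure computation for the residue and the degenerate case $z_1=0$, both of which you handle correctly.
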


\begin{proof}
Without loss of generality, assume that $|z_1| \ge |z_2|$ with
  $z_1\in Q_{j_1}^{\theta}$ and $z_2\in Q_{j_2}^{\theta}$. Then we
  have
$$
    |Y_{t, \theta}(z_1)-Y_{t, \theta}(z_2)|=\left\{
      \begin{array}{ll}
        |\xi^{j_1}-\xi^{j_2}|, & \hbox{if $t\leq |z_2|$,} \\
        1, & \hbox{if $|z_2|<t\leq |z_1|$,} \\
        0, & \hbox{if $|z_1|<t$.}
      \end{array}
    \right.
$$
  Hence,
$$
    \int_0^1\left| Y_{t, \theta}(z_1)-Y_{t, \theta}(z_2)\right|\, dt=|\xi^{j_1}-\xi^{j_2}|\cdot|z_2|+(|z_1|-|z_2|).
$$
  Let $\alpha\in [0, \pi]$ be the angle between the two rays
  joining $z_1, z_2$ to the origin. If $2\pi l/k\leq \alpha<
  2\pi (l+1)/k$ for some integer $0\leq l<k/2$, the term
  $|\xi^{j_1}-\xi^{j_2}|$ is equal to either $|1-\xi^l|$ or
  $|1-\xi^{l+1}|$, hence we calculate
  \begin{equation*}
  \frac{1}{2\pi}\int_0^{2\pi}\int_0^1 \left| Y_{t,
        \theta}(z_1)- Y_{t, \theta}(z_2) \right|\, dt\, d\theta =
  \phantom{\frac{k\left(\alpha_{z_1z_2}-2\pi l/k\right)}{2\pi}
    \left(|1-\xi^{l+1}|\cdot|z_2|^2 + |z_1|^2 - |z_2|^2 \right)}
  \end{equation*}
  \begin{eqnarray}
\notag  &=& \left(\frac{k\alpha}{2\pi}-l\right)  \left(|1-\xi^{l+1}|\cdot|z_2| + |z_1| - |z_2| \right)\\
\notag  && +\left(l+1-\frac{k\alpha}{2\pi}\right)  \left(|1-\xi^{l}|\cdot|z_2|+|z_1|-|z_2|\right)\\
\notag  &=& \left( \frac{k\alpha}{2\pi}-l\right)  \left(  |1-\xi^{l+1}| - |1-\xi^{l}|  \right)   \cdot|z_2| +  |1-\xi^l|\cdot|z_2|\, +\, \left(|z_1|-|z_2|\right) \\
\label{lemma:key-upper-bound}
  \end{eqnarray}
 At this point we consider two cases:
 \begin{enumerate}[(a)]
 
 \item If $l > 0$: Then we have $|1-\xi^{l+1}|\leq |1-\xi|+|1-\xi^l|\leq 2|1-\xi^l|$, and since $0 \le \frac{k\alpha}{2\pi} - l \le 1$ the quantity at the end of \eqref{lemma:key-upper-bound} is bounded above by
 $$
 2|z_2|\cdot|1-\xi^l| + |z_1 - z_2|
 $$
   Observe that we have
 $$
    |z_1-z_2|\geq \left|\frac{z_1}{|z_1|}|z_2|-z_2\right| = |z_2| \cdot |1 - e^{i\alpha}| \geq  |z_2|\cdot|1-\xi^l|
 $$
from where we get the desired bound of $3 |z_1-z_2|$.
 
 \item If $l = 0$: The quantity at the end of \eqref{lemma:key-upper-bound} reduces to
 $$
 \frac{k\alpha}{2 \pi} \cdot |1-\xi| \cdot |z_2| + \left(|z_1|-|z_2|\right) \le  \frac{k\alpha}{2 \pi} \cdot |1-\xi| \cdot |z_2| + |z_1-z_2|.
 $$
 Notice that
 \begin{multline*}
  \frac{k\alpha}{2 \pi} \cdot |1-\xi| \cdot |z_2| =  \frac{k\alpha}{2 \pi} \cdot |1-e^{2\pi i / k}| \cdot |z_2|  \\
  \le   \frac{k\alpha}{2 \pi}   \frac{2\pi}{k} \cdot |z_2| = \alpha \cdot |z_2| \le  |z_2| \cdot |1 - e^{i\alpha}|
 \end{multline*}
 which is bounded above by $|z_1-z_2|$ as calculated in the previous part.
 \end{enumerate}
 
   In either case, we obtain
$$
    \frac{1}{2\pi}\int_0^{2\pi}\int_0^1 \left|Y_{t, \theta}(z_1)-
    Y_{t, \theta}(z_2)\right|\, dt\, d\theta \leq
    3|z_1-z_2|.
$$
as desired.
\end{proof}

The previous lemma can be understood as a version of a coarea inequality,
which becomes clearer from the following variation of \cite[Lemma 4.3]{LLPP}.

\begin{lemma}[Coarea inequality]\label{lemma:coarea}
  Let $s: E^{or}\to S_k^1$ be a signature of $G$. For any
  function $f:V\to \C$ with $\max_{u\in V}|f(u)|=1$, we have
  \begin{multline*} 
    \int_0^1 \iota^s\big(\{|f|\ge t\}\big)\, + \, \big|E\big(\{|f|\ge t\}, \{|f|\ge t\}^c \big)\big| \, dt
     \\ \leq 3\sum_{\{u,v\}\in E}w_{uv}\, \left|f(u)-s_{uv}f(v)\right|.
  \end{multline*}
\end{lemma}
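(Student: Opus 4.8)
The plan is to feed the function $f$ into the step functions $Y_{t,\theta}$ of Lemma~\ref{lemma:key}: for each threshold $t\in(0,1]$ and each angle $\theta\in[0,2\pi)$ this produces a single switching function on the superlevel set $X_t:=\{u\in V:|f(u)|\ge t\}$ that controls both $\iota^s(X_t)$ and $|E(X_t,X_t^c)|$ simultaneously, and then one integrates in $t$ and averages in $\theta$. Note first that $\max_{u\in V}|f(u)|=1$ ensures $X_t\neq\emptyset$ for every $t\in(0,1]$ (so $\iota^s(X_t)$ makes sense) and that $f(u)$ and $s_{uv}f(v)$ lie in $\overline{B_1(0)}$, so $Y_{t,\theta}$ can be evaluated at these points.

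The key observation is that $Y_{t,\theta}$ is equivariant for the action of $S_k^1$: multiplication by $\xi^m$ preserves moduli and carries the sector $Q_j^\theta$ onto $Q_{j+m}^\theta$ (indices modulo $k$), so directly from the definitions $Y_{t,\theta}(\xi^m z)=\xi^m Y_{t,\theta}(z)$ for all $z\in\overline{B_1(0)}$ and all $m$; since every $s_{uv}$ lies in $S_k^1$, this gives $Y_{t,\theta}(s_{uv}z)=s_{uv}Y_{t,\theta}(z)$.

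Fix $t$ and $\theta$ and put $\tau_{t,\theta}(u):=Y_{t,\theta}(f(u))$ for $u\in X_t$. Since $|f(u)|\ge t$ forces $f(u)\notin B_t(0)$, the map $\tau_{t,\theta}$ takes values in $S_k^1$, hence is an admissible switching function on $X_t$, and the definition of the frustration index gives
$$
\iota^s(X_t)\;\le\;\sum_{\substack{\{u,v\}\in E\\ u,v\in X_t}}w_{uv}\,\bigl|Y_{t,\theta}(f(u))-s_{uv}Y_{t,\theta}(f(v))\bigr|.
$$
For an edge $\{u,v\}\in E$ with $u\in X_t$ and $v\in X_t^c$ we have $|s_{uv}f(v)|=|f(v)|<t$, so by equivariance $Y_{t,\theta}(s_{uv}f(v))=0$ while $|Y_{t,\theta}(f(u))|=1$, and the corresponding summand is exactly $w_{uv}$; summing over all such edges recovers $|E(X_t,X_t^c)|$. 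An edge with both endpoints in $X_t^c$ contributes $0$. Adding the three contributions yields, for every $t\in(0,1]$ and $\theta\in[0,2\pi)$,
$$
\iota^s(X_t)+|E(X_t,X_t^c)|\;\le\;\sum_{\{u,v\}\in E}w_{uv}\,\bigl|Y_{t,\theta}(f(u))-s_{uv}Y_{t,\theta}(f(v))\bigr|.
$$

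Finally, integrate this inequality over $t\in(0,1]$ and average over $\theta\in[0,2\pi)$. The left-hand side does not involve $\theta$, and on the right the finite sum pulls outside both integrals; rewriting $|Y_{t,\theta}(f(u))-s_{uv}Y_{t,\theta}(f(v))|=|Y_{t,\theta}(f(u))-Y_{t,\theta}(s_{uv}f(v))|$ by equivariance and invoking Lemma~\ref{lemma:key} with $z_1=f(u)$ and $z_2=s_{uv}f(v)$ bounds each averaged summand by $3\,|f(u)-s_{uv}f(v)|$, which is exactly the asserted inequality. The only genuinely delicate point is the equivariance of $Y_{t,\theta}$ under the $S_k^1$-action: it is precisely what converts the ``twisted'' differences $|Y_{t,\theta}(f(u))-s_{uv}Y_{t,\theta}(f(v))|$ forced on us by the frustration index into the plain differences estimated in Lemma~\ref{lemma:key}; everything else is bookkeeping over the three types of edges determined by $X_t$.
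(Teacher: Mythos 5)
Your proof is correct and follows essentially the same strategy as the paper's: you use $Y_{t,\theta}(f(\cdot))$ restricted to $\{|f|\ge t\}$ as a competitor switching function to bound the frustration index, split the edge sum into the three types determined by the superlevel set, invoke the $S_k^1$-equivariance $Y_{t,\theta}(s_{uv}z)=s_{uv}Y_{t,\theta}(z)$, and then integrate and average before applying Lemma~\ref{lemma:key}. The only difference is that you spell out the equivariance and the contribution of each edge type in more detail than the paper, which states these points tersely.
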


\begin{proof}
  First observe that
  \begin{multline*}
    \frac{1}{2\pi}\int_0^{2\pi}\int_0^1\sum_{\{u,v\}\in E}w_{uv}\,
    \left| Y_{t, \theta}(f(u))-
      s_{uv}Y_{t, \theta}(f(v))\right|\, dt\, d\theta\\
    \geq  \int_0^1 \iota^s\big(\{|f|\ge t\}\big)\, + \, \big|E\big(\{|f|\ge t\}, \{|f|\ge t\}^c \big)\big| \, dt. 
  \end{multline*}
In fact, the summation in the integrand of the LHS of the above inequality can be split into two parts:
\begin{enumerate}[(a)]
\item The summation over edges connecting one vertex from $\{|f|\ge t\}$ and one from $\{|f|\ge t\}^c$: This part equals  $\big|E\big(\{|f|\ge t\}, \{|f|\ge t\}^c \big)\big|$.
\item The summation over edges connecting two vertices from $\{|f|\ge t\}$: This part is bounded from below by $\iota^s\left(\{|f|\ge t\}\right)$ by Definition \ref{def:frustration index}.
\end{enumerate}

Notice further that
$$
    s_{uv}Y_{t,\theta}(f(v))=Y_{t,\theta}(s_{uv}f(v)),
$$
the desired inequality
now follows directly from Lemma \ref{lemma:key}.
\end{proof}

\subsection{Sobolev inequality: case $p=1$}

We are now ready to prove our first Sobolev inequality.

\begin{theorem}\label{thm:Sobolev-p=1}
Let $G$ be a finite graph with an $S_k^1$-valued signature $s$ such that $(G,s)$ has isoperimetric dimension $\delta$ with isoperimetric constant $c_\delta$.
For any function $f : V \to \C$ we have
$$
\left[  \sum_{u \in V}  \big|  f(u)  \big|^{\frac{\delta}{\delta-1}}  \mu(u) \right]^{\frac{\delta-1}{\delta}} \le \frac{3}{c_\delta} \sum_{\{u,v\}\in E}w_{uv}\, \left|f(u)-s_{uv}f(v)\right|.
$$
\end{theorem}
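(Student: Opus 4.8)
The plan is to deduce the Sobolev inequality from the coarea inequality (Lemma \ref{lemma:coarea}) combined with the isoperimetric hypothesis \eqref{eq:isoperimetric constant}, following the classical template: integrate the isoperimetric estimate over level sets, recognize the left-hand side as a layer-cake representation of the Sobolev norm, and apply Minkowski's integral inequality.

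\medskip

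First I would reduce to the normalized case. By homogeneity of both sides in $f$, we may assume $\max_{u\in V}|f(u)| = 1$ (the case $f \equiv 0$ being trivial). Next, for each $t \in (0,1]$ apply the isoperimetric hypothesis to the set $V_1 = \{|f| \ge t\}$, which is nonempty for $t$ in that range, to obtain
$$
\iota^s\big(\{|f|\ge t\}\big) + \big|E\big(\{|f|\ge t\}, \{|f|\ge t\}^c\big)\big| \ge c_\delta\,\vol_\mu\big(\{|f|\ge t\}\big)^{\frac{\delta-1}{\delta}}.
$$
Integrating over $t \in (0,1)$ and invoking Lemma \ref{lemma:coarea} to bound the left side gives
$$
c_\delta \int_0^1 \vol_\mu\big(\{|f|\ge t\}\big)^{\frac{\delta-1}{\delta}}\, dt \le 3 \sum_{\{u,v\}\in E} w_{uv}\,\big|f(u) - s_{uv}f(v)\big|.
$$
So it remains to bound the Sobolev norm on the left-hand side of the theorem by $\int_0^1 \vol_\mu(\{|f|\ge t\})^{\frac{\delta-1}{\delta}}\, dt$.

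\medskip

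For that last step, write $q = \frac{\delta}{\delta-1}$ so that $\frac{\delta-1}{\delta} = \frac{1}{q}$, and use the layer-cake identity: for each vertex $u$, $|f(u)|^q = \int_0^{|f(u)|^q} dr$, and after the substitution $r = t^q$ (valid since $|f(u)| \le 1$) one gets $|f(u)|^q = \int_0^1 q t^{q-1}\mathbf{1}_{\{|f(u)| \ge t\}}\, dt$. Summing against $\mu$,
$$
\sum_{u\in V} |f(u)|^q \mu(u) = q\int_0^1 t^{q-1} \vol_\mu\big(\{|f| \ge t\}\big)\, dt.
$$
Raising to the power $1/q$ and using Minkowski's integral inequality (i.e.\ the triangle inequality in $L^q$ of the function $t \mapsto \big(t^{q-1}\,q\,\mathbf 1_{\{|f|\ge t\}}\big)$ viewed as $V$-valued, or more simply the continuous Minkowski inequality for the measure $\mu$ against the variable $t$) yields
$$
\left[\sum_{u\in V} |f(u)|^q \mu(u)\right]^{1/q} = \left[ q\int_0^1 t^{q-1}\vol_\mu\big(\{|f|\ge t\}\big)\,dt\right]^{1/q}.
$$
One then bounds the right-hand side by $\int_0^1 \vol_\mu(\{|f|\ge t\})^{1/q}\,dt$; this is where the inequality really enters. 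Concretely, since $q - 1 \le q\,\frac{q-1}{q}$ trivially and $\vol_\mu(\{|f|\ge t\})$ is nonincreasing in $t$, one can argue as in \cite{ChungYau95,Tillich}: split the $t$-integral or use the monotonicity to compare $\big(\int_0^1 t^{q-1}\vol_\mu(\cdots)\,dt\big)^{1/q}$ with $\int_0^1 \vol_\mu(\cdots)^{1/q}\,dt$ via Hölder after the change of variables making the level-set function constant on dyadic blocks. Combining this with the displayed coarea/isoperimetric estimate gives exactly the claimed bound with constant $3/c_\delta$.

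\medskip

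The main obstacle I anticipate is the very last comparison — passing from $\big(q\int_0^1 t^{q-1} V(t)\,dt\big)^{1/q}$ (with $V$ nonincreasing) to $\int_0^1 V(t)^{1/q}\,dt$. This is the discrete analogue of the coarea step in Maz'ya's proof and requires either Minkowski's integral inequality applied carefully (treating $t \mapsto t^{q-1}\mathbf 1_{\{|f|\ge t\}}(u)$ as a function of two variables and taking $L^q(\mu)$ inside the $dt$-integral), or an explicit monotone-rearrangement argument. Everything else — normalization, the level-set application of the isoperimetric inequality, and the appeal to Lemma \ref{lemma:coarea} — is routine.
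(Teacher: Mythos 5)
Your overall template is the same as the paper's: normalize to $\max_u |f(u)| = 1$, apply the isoperimetric inequality to the level sets $\{|f|\ge t\}$ for $t\in(0,1]$, integrate and invoke Lemma~\ref{lemma:coarea} to get
$$
c_\delta \int_0^1 \vol_\mu\big(\{|f|\ge t\}\big)^{\frac{\delta-1}{\delta}}\,dt \le 3\sum_{\{u,v\}\in E} w_{uv}\,|f(u)-s_{uv}f(v)|,
$$
and then convert $\int_0^1 \vol_\mu(\{|f|\ge t\})^{1/q}\,dt$ (with $q = \delta/(\delta-1)$) into the $L^q(\mu)$-norm of $f$ via a Minkowski-type argument. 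The one place you go astray is the last step: you write the $q$-layer-cake $|f(u)|^q = q\int_0^1 t^{q-1}\chi_{\{|f|\ge t\}}(u)\,dt$, which forces you to prove $\bigl(q\int_0^1 t^{q-1}V(t)\,dt\bigr)^{1/q} \le \int_0^1 V(t)^{1/q}\,dt$ for nonincreasing $V$. That inequality is in fact true --- it follows from $F(t):=\int_0^t V^{1/q} \ge t\,V(t)^{1/q}$ and $\frac{d}{dt}F^q = qF^{q-1}V^{1/q} \ge q\,t^{q-1}V$, so $F(1)^q \ge q\int_0^1 t^{q-1}V\,dt$ --- but it's a detour, and your sketch of it (``split the $t$-integral\ldots via H\"older\ldots dyadic blocks'') is not yet a proof. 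The paper avoids all this by applying Minkowski's integral inequality directly to the \emph{linear} layer-cake $|f(u)| = \int_0^1 \chi_{\{|f|\ge t\}}(u)\,dt$: since $\|\chi_{\{|f|\ge t\}}\|_{L^q(\mu)} = \vol_\mu(\{|f|\ge t\})^{1/q}$, one gets immediately
$$
\left[\sum_{u\in V}|f(u)|^q\mu(u)\right]^{1/q} = \left\|\int_0^1 \chi_{\{|f|\ge t\}}\,dt\right\|_{L^q(\mu)} \le \int_0^1 \vol_\mu\big(\{|f|\ge t\}\big)^{1/q}\,dt,
$$
with no weight $t^{q-1}$ and no monotonicity argument. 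So your proof is salvageable but leaves a nontrivial step unproved; switching from the $q$-layer-cake of $|f|^q$ to the linear layer-cake of $|f|$ closes it cleanly.
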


\begin{proof}
Assume without loss of generality that the function $f$ is not identically zero, otherwise there is nothing to prove.
After normalizing, we may assume that $\max_{u\in V}|f(u)|=1$.
From Lemma \ref{lemma:coarea} and Definition \ref{def:isoperimetric constant} we immediately get
$$
3 \sum_{\{u,v\}\in E}w_{uv}\, \left|f(u)-s_{uv}f(v)\right| \ge c_\delta \int_0^1  {\vol_{\mu}\big(\{|f|\ge t\}\big)} ^{\frac{\delta-1}{\delta}}  dt
$$
Now, using Minkowski's integral inequality
\begin{align*}
 \int_0^1  {\vol_{\mu}\big(\{|f|\ge t\}\big)} ^{\frac{\delta-1}{\delta}}  dt &= \int_0^1 \bigg[  \sum_{u \in V} \chi_{\{|f|\ge t\}}(u) \mu(u) \bigg]^{\frac{\delta-1}{\delta}} dt \\
 &= \int_0^1 \bigg[  \sum_{u \in V} \big| \chi_{\{|f|\ge t\}}(u) \big|^{\frac{\delta}{\delta-1}} \mu(u) \bigg]^{\frac{\delta-1}{\delta}} dt \\
 &\ge \left[  \sum_{u \in V}  \bigg|  \int_0^1 \chi_{\{|f|\ge t\}}(u) dt  \bigg|^{\frac{\delta}{\delta-1}}  \mu(u) \right]^{\frac{\delta-1}{\delta}} \\
 &= \left[  \sum_{u \in V}  \big|  f(u)  \big|^{\frac{\delta}{\delta-1}}  \mu(u) \right]^{\frac{\delta-1}{\delta}}
\end{align*}
from where the conclusion follows.
\end{proof}

The constant $3/c_\delta$ in Theorem \ref{thm:Sobolev-p=1} is essentially optimal, as illustrated by the following corollary. Compare to \cite[Cor. 1]{Chung-Tetali}.

\begin{corollary}\label{cor:variational-characterization-isoperimetric-constant}
Let $G$ be a finite graph with an  $S_k^1$-valued signature $s$ such that $(G,s)$ has isoperimetric dimension $\delta$ with isoperimetric constant $c_\delta$.
Then
$$
\frac{c_\delta}{3} \le \inf_{f \not= 0} \left[ \frac{ \sum_{\{u,v\}\in E}w_{uv}\, \left|f(u)-s_{uv}f(v)\right| }{ \left[ \sum_{u \in V}  \big|  f(u)  \big|^{\frac{\delta}{\delta-1}} \mu(u) \right]^{\frac{\delta-1}{\delta}}}
\right] \le c_\delta
$$
\end{corollary}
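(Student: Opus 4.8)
The plan is to establish the two inequalities in the displayed chain separately, since the upper and lower bounds on the infimum have entirely different sources.

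\medskip

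\emph{The lower bound $c_\delta/3 \le \inf_{f\neq 0}(\cdots)$.} This is immediate from Theorem \ref{thm:Sobolev-p=1}: for any $f \neq 0$, that theorem gives
$$
\left[  \sum_{u \in V}  \big|  f(u)  \big|^{\frac{\delta}{\delta-1}}  \mu(u) \right]^{\frac{\delta-1}{\delta}} \le \frac{3}{c_\delta} \sum_{\{u,v\}\in E}w_{uv}\, \left|f(u)-s_{uv}f(v)\right|,
$$
and rearranging (the denominator is positive because $f\neq 0$) yields that the bracketed ratio is at least $c_\delta/3$. Taking the infimum over $f \neq 0$ gives the left-hand inequality.

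\medskip

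\emph{The upper bound $\inf_{f\neq 0}(\cdots) \le c_\delta$.} The idea is to test the ratio on indicator functions. Recall that by Definition \ref{def:isoperimetric constant}, $c_\delta$ is the largest constant for which \eqref{eq:isoperimetric constant} holds for all nonempty $V_1 \subseteq V$; equivalently,
$$
c_\delta = \inf_{\emptyset \neq V_1 \subseteq V} \frac{\iota^s(V_1)+|E(V_1, V_1^c)|}{\vol_\mu(V_1)^{\frac{\delta-1}{\delta}}}.
$$
Fix a nonempty $V_1$ and let $\tau : V_1 \to S^1_k$ be a switching function achieving the minimum in the definition of $\iota^s(V_1)$. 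Define $f : V \to \C$ by $f(u) = \tau(u)$ for $u \in V_1$ and $f(u) = 0$ for $u \in V_1^c$. Then $|f(u)| = \chi_{V_1}(u)$, so $\sum_{u\in V} |f(u)|^{\delta/(\delta-1)}\mu(u) = \vol_\mu(V_1)$, and the denominator of the ratio is $\vol_\mu(V_1)^{\frac{\delta-1}{\delta}}$. For the numerator, split the edge sum: edges inside $V_1$ contribute $\sum_{\{u,v\}\in E_1} w_{uv}|\tau(u)-s_{uv}\tau(v)| = \iota^s(V_1)$ by the choice of $\tau$; edges between $V_1$ and $V_1^c$ contribute $\sum w_{uv}|f(u)-s_{uv}f(v)| = \sum w_{uv}|f(u)| $ or $|f(v)|$, each term being $w_{uv}\cdot 1$, so the total is exactly $|E(V_1,V_1^c)|$; edges inside $V_1^c$ contribute $0$. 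Hence for this $f$ the ratio equals $(\iota^s(V_1)+|E(V_1,V_1^c)|)/\vol_\mu(V_1)^{\frac{\delta-1}{\delta}}$. Taking the infimum over $V_1$ shows $\inf_{f\neq 0}(\cdots) \le c_\delta$.

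\medskip

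The main (minor) obstacle is purely bookkeeping: one must verify that the normalization built into Theorem \ref{thm:Sobolev-p=1} (where $f$ was scaled so that $\max|f|=1$) does not interfere, which it does not since both sides of the Sobolev inequality are homogeneous of the same degree in $f$, so the ratio is scale-invariant; and one must be careful that switching functions take values in $S^1_k$ so that the test function $f$ is genuinely $\C$-valued with $|f| \le 1$, making it admissible. Everything else is a direct substitution. I would present the proof in roughly the two-paragraph form above, citing Theorem \ref{thm:Sobolev-p=1} for the lower bound and the indicator-function computation for the upper bound.
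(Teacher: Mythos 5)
Your proof is correct and follows essentially the same route as the paper: lower bound by rearranging the Sobolev inequality from Theorem~\ref{thm:Sobolev-p=1}, upper bound by testing the Rayleigh-type quotient on $f = \chi_{V_1}\tau$ with $\tau$ an optimal switching function, and verifying that the numerator becomes $\iota^s(V_1)+|E(V_1,V_1^c)|$ while the denominator becomes $\vol_\mu(V_1)^{(\delta-1)/\delta}$. The only (cosmetic) difference is that you make explicit the reading of $c_\delta$ as the optimal constant, i.e.\ $c_\delta = \inf_{V_1}\big(\iota^s(V_1)+|E(V_1,V_1^c)|\big)/\vol_\mu(V_1)^{(\delta-1)/\delta}$, whereas the paper leaves this implicit and instead picks a $V_1$ ``achieving equality''; on a finite graph these amount to the same thing.
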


\begin{proof}
The lower bound follows immediately from Theorem \ref{thm:Sobolev-p=1}.
To prove the upper bound it suffices to choose $f = \chi_{V_1} \tau$, where $V_1 \subset V$ is a subset achieving equality in the definition of the isoperimetric constant, and $\tau$ is a switching function achieving equality in the definition of $\iota^s(V_1)$.
\end{proof}

\begin{remark}
 Theorem \ref{thm:Sobolev-p=1} might at first sight appear to be totally wrong when compared to the standard case \cite[Thm. 1]{ChungYau95}, \cite[Thm. 2]{Ostrovskii}: if the signature is trivial, the right-hand side of the inequality in Theorem \ref{thm:Sobolev-p=1} does not change when a constant is added to the function $f$, but the left-hand side will grow without control.
This apparent contradiction shows an important difference between the classical isoperimetric theory for graphs and the one we are currently developing for their magnetic counterparts:
in Definition \ref{def:isoperimetric constant}, the inequality must hold for \emph{any nonempty subset of $V$}.
If the signature $s$ on the graph $G$ is trivial (or more generally, if the signature is balanced) we will have
$$
 \iota^s(V)+|E(V, \emptyset)| = 0+0=0
$$
and therefore the pair $(G,s)$ \emph{cannot have any isoperimetric dimension}.
This seemingly strange situation stems from a mismatch in the terminology: recall that the classical Cheeger constant corresponds to the $2$-way Cheeger constant of \cite{LLPP}, not the $1$-way one, and our current isoperimetric theory is a refinement of the $1$-way Cheeger constant. 
\end{remark}

\subsection{Sobolev inequality: case $1<p<\infty$}

We follow the strategy of \cite{Ostrovskii} to deduce the case $p>1$ from the case $p=1$.

Let $\alpha \ge 1$. For a complex number $z = re^{i\theta} $ we denote $r^\alpha e^{i\theta} = z \cdot |z|^{\alpha-1}$ by $z^\alpha$.
The following is a complex version of \cite[Lemma 4]{Matousek}.

\begin{lemma}\label{lemma:complex-bernoulli}
For any $z_1,z_2 \in \C$ and $\alpha \ge 1$, we have
$$
\left| z_1^\alpha - z_2^\alpha \right| \le \alpha |z_1 - z_2| \left( |z_1|^{\alpha-1} + |z_2|^{\alpha-1}  \right).
$$
\end{lemma}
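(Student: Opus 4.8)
The plan is to reduce the complex inequality to the real case by a polar-coordinate argument, following the structure of the original real lemma. Write $z_1 = r_1 e^{i\theta_1}$ and $z_2 = r_2 e^{i\theta_2}$. The key observation is that the map $z \mapsto z^\alpha$ acts on the modulus by $r \mapsto r^\alpha$ and leaves the argument fixed, so I can try to split the difference $z_1^\alpha - z_2^\alpha$ into a ``radial'' part and an ``angular'' part and estimate each separately against $|z_1 - z_2|$ times the stated factor.

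First I would dispose of the trivial cases: if $z_1 = z_2$ or if either is zero the inequality is immediate (for $z_2 = 0$ it reads $|z_1|^\alpha \le \alpha |z_1|^\alpha$, which holds since $\alpha \ge 1$). So assume both are nonzero and distinct. Next, I would interpolate through the point $w := r_2 e^{i\theta_1}$, i.e. the point with the modulus of $z_2$ but the argument of $z_1$; its $\alpha$-power is $w^\alpha = r_2^\alpha e^{i\theta_1}$. Then by the triangle inequality
$$
|z_1^\alpha - z_2^\alpha| \le |z_1^\alpha - w^\alpha| + |w^\alpha - z_2^\alpha| = |r_1^\alpha - r_2^\alpha| + r_2^\alpha |e^{i\theta_1} - e^{i\theta_2}|.
$$
For the first term I invoke the real one-variable estimate (the real version of \cite[Lemma 4]{Matousek}, or just the mean value theorem applied to $t \mapsto t^\alpha$): $|r_1^\alpha - r_2^\alpha| \le \alpha |r_1 - r_2|(r_1^{\alpha-1} + r_2^{\alpha-1})$. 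For the second term, I use $r_2^\alpha |e^{i\theta_1} - e^{i\theta_2}| = r_2^{\alpha-1} \cdot r_2 |e^{i\theta_1} - e^{i\theta_2}| = r_2^{\alpha-1} |z_1' - z_2|$ where $z_1' = r_2 e^{i\theta_1}$ is the radial projection of $z_1$ onto the circle of radius $r_2$; and as noted already in the proof of Lemma \ref{lemma:key}, $|z_1' - z_2| \le |z_1 - z_2|$. Likewise $|r_1 - r_2| = \big||z_1| - |z_2|\big| \le |z_1 - z_2|$.

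Putting these together gives
$$
|z_1^\alpha - z_2^\alpha| \le \alpha |z_1 - z_2|(r_1^{\alpha-1} + r_2^{\alpha-1}) + r_2^{\alpha-1}|z_1 - z_2| \le \alpha|z_1-z_2|(|z_1|^{\alpha-1}+|z_2|^{\alpha-1}) + |z_1-z_2|\,|z_2|^{\alpha-1},
$$
which is slightly weaker than claimed because of the extra $|z_2|^{\alpha-1}|z_1-z_2|$ term. The main obstacle is therefore absorbing that stray term into the constant $\alpha$ rather than $\alpha+1$. I expect this can be fixed by being more careful: instead of the crude interpolation, symmetrize by also interpolating through $r_1 e^{i\theta_2}$ and averaging the two bounds, or — more promisingly — bound the angular contribution directly by the chord length without first peeling off a power of $r_2$, i.e. estimate $|r_2^\alpha e^{i\theta_1} - r_2^\alpha e^{i\theta_2}|$ using that on the circle of radius $r_2$ the arc from $\theta_1$ to $\theta_2$ has the $\alpha$-power map as an \emph{isometry up to the factor } $r_2^{\alpha-1}$, and then comparing $r_2|e^{i\theta_1}-e^{i\theta_2}|$ with the full chord $|z_1-z_2|$ together with the radial term in a way that reuses the $\alpha$ already present. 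Alternatively, one can integrate: parametrize the segment or a two-leg path from $z_1$ to $z_2$, write $z^\alpha$ along it, and bound $\frac{d}{ds}(\gamma(s)^\alpha)$ in modulus by $\alpha |\gamma'(s)| |\gamma(s)|^{\alpha-1}$, which is the cleanest route and avoids the lossy triangle-inequality split entirely; the only care needed there is choosing a path along which $|\gamma(s)|^{\alpha-1} \le \max(|z_1|,|z_2|)^{\alpha-1} \le |z_1|^{\alpha-1}+|z_2|^{\alpha-1}$, for instance the two-segment path through $r_2 e^{i\theta_1}$ or a monotone-modulus path, after which integrating the derivative bound over total path length $\le |z_1-z_2|$ (for a suitably chosen path, or with a harmless constant) yields the claim.
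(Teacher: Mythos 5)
Your main argument — interpolating through $w = r_2 e^{i\theta_1}$ and splitting by the triangle inequality — gives, as you already note, the weaker constant $\alpha + 1$ rather than $\alpha$, so as written the proof does not establish the lemma. Moreover, the first of your two proposed repairs is a dead end: the alternate interpolation point $r_1 e^{i\theta_2}$ (with $r_1 \ge r_2$) is an \emph{outward} radial projection of $z_2$, which is not a contraction, and indeed $r_1 |e^{i\theta_1} - e^{i\theta_2}|$ can exceed $|z_1 - z_2|$ (take $z_1 = r_1$, $z_2 = -r_2$ with $r_1 > r_2$: then $r_1|e^{i\theta_1}-e^{i\theta_2}| = 2r_1 > r_1 + r_2 = |z_1-z_2|$). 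So averaging the two two-leg bounds does not remove the stray term.

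Your second repair, the path-integral argument, does work, but only if you commit to the \emph{straight segment} $\gamma(s) = (1-s)z_1 + s z_2$ and avoid the hedges about "two-leg paths" or "harmless constants" (a harmless multiplicative constant would prove a strictly weaker statement than the lemma asserts). Concretely: in polar coordinates $z = re^{i\phi}$ one has $d(z^\alpha) = \alpha r^{\alpha-1}e^{i\phi}\,dr + i r^\alpha e^{i\phi}\,d\phi$, and since $(dr, r\,d\phi)$ is an orthonormal frame, the operator norm of the differential of $z \mapsto z^\alpha$ is $\max(\alpha, 1)\, r^{\alpha-1} = \alpha |z|^{\alpha-1}$; this is exactly the pointwise bound $\left|\tfrac{d}{ds}\gamma(s)^\alpha\right| \le \alpha |\gamma(s)|^{\alpha-1}|z_2 - z_1|$ you wanted. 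On the segment, $|\gamma(s)| \le \max(|z_1|,|z_2|)$ by convexity of the disk, so integrating over $s\in[0,1]$ gives $|z_1^\alpha - z_2^\alpha| \le \alpha\,\max(|z_1|,|z_2|)^{\alpha-1}|z_1 - z_2| \le \alpha\,(|z_1|^{\alpha-1}+|z_2|^{\alpha-1})|z_1 - z_2|$. (If the segment passes through $0$, the map $z\mapsto z|z|^{\alpha-1}$ is still locally Lipschitz for $\alpha\ge1$, so the fundamental theorem of calculus applies a.e. and the bound survives.) Once set up this way, the argument is complete and in fact slightly sharper than the statement, since $\max(|z_1|,|z_2|)^{\alpha-1}$ suffices in place of the sum.

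For comparison, the paper's route is quite different and more elementary: normalize to $z_1 = 1$, $z_2 = re^{i\theta}$ with $r\le 1$, discard the factor $1+r^{\alpha-1}\ge1$, square both sides, and reduce the resulting inequality in $x=\cos\theta$ to a one-variable calculus check (monotonicity in $x$, then the real Bernoulli inequality at $x=1$). Your path-integral proof, once tightened as above, is arguably cleaner and gives a geometric explanation for the constant $\alpha$, whereas the paper's avoids any multivariable differentiation at the cost of a bare-hands squaring computation.
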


\begin{proof}
Without loss of generality, we may assume neither of $z_1, z_2$ is zero. We may also assume $|z_1| \ge |z_2|$, and by rescaling we may take $z_1 = 1$.
Therefore, what we need to show is that for $r \in (0,1]$ and $\theta \in \R$,
$$
\big| 1-r^\alpha e^{i\theta} \big| \le \alpha \big| 1-r e^{i\theta} \big| \cdot \left( 1 + r^{\alpha-1}  \right)
$$
so it suffices to show that $\big| 1-r^\alpha e^{i\theta} \big| \le \alpha \big| 1-r e^{i\theta} \big|$.
Taking squares, this is equivalent to
$$
(1-r^\alpha\cos\theta)^2 + r^{2\alpha}\sin^2\theta \le \alpha^2\big( (1-r\cos\theta)^2 + r^2\sin^2\theta\big),
$$
that is,
$$
1 - 2r^\alpha\cos\theta + r^{2\alpha} \le \alpha^2 \big( 1-2r\cos\theta + r^2\big).
$$
Denoting $x = \cos\theta \in [-1,1]$ and rearranging we get
$$
0 \le \alpha^2 \big( 1-2rx + r^2\big) - (1 - 2r^\alpha x + r^{2\alpha})
$$
The derivative with respect to $x$ of the right-hand side is $2(-\alpha^2r + r^\alpha) \le 0$, so the expression is decreasing in $x$.
Since the inequality is valid when $x=1$ (it reduces to the well-known Bernoulli inequality as in the proof of \cite[Lemma 4]{Matousek}), we achieve the desired conclusion. 
\end{proof}

Following essentially the same argument as in \cite[Sec. 2.2]{Ostrovskii}, we will prove the general Sobolev inequality.

\begin{theorem}\label{thm:Sobolev}
Let $G$ be a finite graph with an  $S_k^1$-valued  signature $s$ and let $1 \le p < \delta$.
Suppose that $(G,s)$ has isoperimetric dimension $\delta$ with isoperimetric constant $c_\delta$.
Let $q$ be defined by $1/p = 1/q + 1/\delta$.
Then there exists a constant $C$ depending only on $p$, $\delta$, $c_\delta$ and $d_\mu$ such that
for any function $f : V \to \C$ we have
$$
\left[  \sum_{u \in V}  \big|  f(u)  \big|^{q}  \mu(u) \right]^{1/q} \le C \left[ \sum_{\{u,v\}\in E}w_{uv}\, \left|f(u)-s_{uv}f(v)\right|^p \right]^{1/p}
$$
\end{theorem}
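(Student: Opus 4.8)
The plan is to deduce the case $1 < p < \delta$ from the case $p = 1$ (Theorem~\ref{thm:Sobolev-p=1}) by applying the latter to a suitable power of $f$. Concretely, fix $\alpha \ge 1$ to be chosen later and apply Theorem~\ref{thm:Sobolev-p=1} to the function $f^\alpha$ (defined as in the paragraph preceding Lemma~\ref{lemma:complex-bernoulli} by $z^\alpha = z|z|^{\alpha-1}$), which satisfies $|f^\alpha(u)| = |f(u)|^\alpha$. This gives
\begin{equation*}
\left[ \sum_{u \in V} |f(u)|^{\frac{\alpha\delta}{\delta-1}} \mu(u) \right]^{\frac{\delta-1}{\delta}} \le \frac{3}{c_\delta} \sum_{\{u,v\}\in E} w_{uv}\, \big| f(u)^\alpha - s_{uv} f(v)^\alpha \big|.
\end{equation*}
The key observation is that $s_{uv} f(v)^\alpha = (s_{uv} f(v))^\alpha$, since $|s_{uv}| = 1$ and raising to the $\alpha$ power only rescales the modulus; hence Lemma~\ref{lemma:complex-bernoulli} applies to the pair $z_1 = f(u)$, $z_2 = s_{uv}f(v)$ and bounds each summand on the right by $\alpha |f(u) - s_{uv}f(v)| \big( |f(u)|^{\alpha-1} + |f(v)|^{\alpha-1} \big)$.

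Next I would choose $\alpha$ so that $\frac{\alpha\delta}{\delta-1} = q$, i.e. $\alpha = \frac{q(\delta-1)}{\delta} = q/p' $ where one checks from $1/p = 1/q + 1/\delta$ that $\alpha \ge 1$ precisely when $p \ge 1$; this makes the left-hand side $\left[ \sum |f(u)|^q \mu(u) \right]^{1/q}$ raised to the power $q \cdot \frac{\delta-1}{\delta} = q/p' = \alpha$. On the right-hand side, I would apply H\"older's inequality with exponents $p$ and $p' = p/(p-1)$ to the sum $\sum_{\{u,v\}} w_{uv} |f(u) - s_{uv}f(v)| \cdot \big(|f(u)|^{\alpha-1} + |f(v)|^{\alpha-1}\big)$, writing $w_{uv} = w_{uv}^{1/p} \cdot w_{uv}^{1/p'}$, to separate out the factor $\left[ \sum_{\{u,v\}} w_{uv} |f(u) - s_{uv}f(v)|^p \right]^{1/p}$, which is the term we want, times $\left[ \sum_{\{u,v\}} w_{uv} \big(|f(u)|^{\alpha-1} + |f(v)|^{\alpha-1}\big)^{p'} \right]^{1/p'}$.

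The remaining task is to bound this last factor by a constant (depending on $d_\mu$) times $\left[ \sum_u |f(u)|^q \mu(u) \right]^{\text{(appropriate power)}}$, so that it can be absorbed into the left-hand side. Using $(a+b)^{p'} \le 2^{p'-1}(a^{p'} + b^{p'})$ and summing over edges, $\sum_{\{u,v\}} w_{uv} |f(u)|^{(\alpha-1)p'} \le \sum_u d_u |f(u)|^{(\alpha-1)p'} \le d_\mu \sum_u |f(u)|^{(\alpha-1)p'} \mu(u)$ by the definition of $d_\mu$. One then checks the arithmetic: $(\alpha-1)p' $ should equal $q$ (this is exactly the identity $(\alpha - 1)p' = \alpha p' - p' = q - p' \cdot \frac{q}{q}$\ldots — more carefully, from $\alpha = q/p'$ one gets $(\alpha-1)p' = q - p'$, so I would instead keep $(\alpha-1)p'$ as an exponent $\le q$ and handle it by noting $\|f\|$ can be normalized, or verify the bookkeeping gives the clean exponent $q$ after combining with the power $\alpha$ on the left; the correct relation is $\alpha p' = q$, hence $(\alpha - 1)p' = q - p'$, and $\frac{q-p'}{q} + \frac{1}{p'}\cdot\frac{q-p'}{q} $ should reconstitute the missing power). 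The main obstacle is precisely this exponent bookkeeping: ensuring that after dividing both sides by $\left[\sum_u |f(u)|^q \mu(u)\right]^{(q-p')/q \cdot 1/p'}$ one is left with exactly $\left[\sum_u |f(u)|^q \mu(u)\right]^{1/q}$ on the left and $C d_\mu^{1/p'} \left[\sum_{\{u,v\}} w_{uv}|f(u)-s_{uv}f(v)|^p\right]^{1/p}$ on the right, with $C = C(p,\delta,c_\delta)$. This is the same computation as in \cite[Sec.~2.2]{Ostrovskii} and \cite[Lemma~4]{Matousek}, the only new input being Lemma~\ref{lemma:complex-bernoulli} and the identity $(s_{uv}f(v))^\alpha = s_{uv} f(v)^\alpha$; once those are in place the argument is routine, so I expect no genuine difficulty beyond careful tracking of the exponents and the case $f \equiv 0$, which is trivial.
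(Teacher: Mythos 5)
Your overall strategy is identical to the paper's: apply Theorem~\ref{thm:Sobolev-p=1} to $f^\alpha$, invoke Lemma~\ref{lemma:complex-bernoulli} via the observation $(s_{uv}f(v))^\alpha = s_{uv}f(v)^\alpha$, apply H\"older with exponents $p, p'$, and absorb the remaining factor using $d_\mu$. The gap is an off-by-one error in the exponent bookkeeping that prevents the argument from closing. You assert $\alpha = \frac{q(\delta-1)}{\delta} = q/p'$, but the second equality is false. Writing $\alpha = \frac{q(\delta-1)}{\delta} = \frac{p(\delta-1)}{\delta-p}$, one computes
$$
\alpha - 1 = \frac{p(\delta-1)-(\delta-p)}{\delta-p} = \frac{\delta(p-1)}{\delta-p} = \frac{q}{p'},
$$
so the correct identity is $(\alpha-1)p' = q$, not $\alpha p' = q$, and certainly not $(\alpha-1)p' = q - p'$ as you derive. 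This is exactly the identity that closes the argument: the H\"older error factor becomes $2 d_\mu^{1/p'}\left[\sum_u |f(u)|^q \mu(u)\right]^{1/p'}$, which is a power of the same quantity appearing as $\left[\sum_u |f(u)|^q \mu(u)\right]^{(\delta-1)/\delta}$ on the left, and the exponent subtraction $\frac{\delta-1}{\delta} - \frac{1}{p'} = \frac{1}{p} - \frac{1}{\delta} = \frac{1}{q}$ gives the theorem.

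With your version $(\alpha-1)p' = q - p'$, the error factor would instead involve $\sum_u |f(u)|^{q-p'}\mu(u)$, which does not match the left-hand side. Trying to relate it to $\sum_u|f(u)|^q\mu(u)$ by H\"older necessarily drags in a $\vol_\mu(G)$-dependent constant, but the theorem statement requires $C$ to depend only on $p,\delta,c_\delta,d_\mu$. The fix is only the corrected identity; the rest of your plan is sound and matches the paper's proof.
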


\begin{proof}
The case $p=1$ was proved in Theorem \ref{thm:Sobolev-p=1}, with $C = 2/c_\delta$. Assume now that $p > 1$.
Let $\alpha \ge 1$.
Applying Theorem \ref{thm:Sobolev-p=1} to the function $f^\alpha$, together with Lemma \ref{lemma:complex-bernoulli} yields
\begin{multline*}
\left[  \sum_{u \in V}  \big|  f(u)  \big|^{\frac{\alpha\delta}{\delta-1}}  \mu(u) \right]^{\frac{\delta-1}{\delta}} \\
 \le \frac{3\alpha}{c_\delta} \sum_{\{u,v\}\in E}w_{uv}\, \left|f(u)-s_{uv}f(v)\right| \big( |f(u)|^{\alpha-1} + |f(v)|^{\alpha-1} \big),
\end{multline*}
where we have used $(s_{uv}f(u))^\alpha = s_{uv} [f(u)]^\alpha$.

Applying H\"older's inequality we obtain, denoting $1/p' = 1-1/p$,
\begin{multline*}
\left[  \sum_{u \in V}  \big|  f(u)  \big|^{\frac{\alpha\delta}{\delta-1}}  \mu(u) \right]^{\frac{\delta-1}{\delta}} \\
 \le \frac{3\alpha}{c_\delta} \left[ \sum_{\{u,v\}\in E}w_{uv}\, \left|f(u)-s_{uv}f(v)\right|^p \right]^{1/p} \left[ \sum_{\{u,v\}\in E}w_{uv}\, \big( |f(u)|^{\alpha-1} + |f(v)|^{\alpha-1} \big)^{p'} \right]^{1/p'}.
\end{multline*}
Since for any $a,b \ge 0$ we have $(a+b)^{p'} \le 2^{p'}a^{p'} + 2^{p'}b^{p'}$, it follows that
\begin{multline*}
\left[ \sum_{\{u,v\}\in E}w_{uv}\, \big( |f(u)|^{\alpha-1} + |f(v)|^{\alpha-1} \big)^{p'} \right]^{1/p'}\\
 \le 2 \left[ \sum_{\{u,v\}\in E}w_{uv}\, \big( |f(u)|^{(\alpha-1)p'} + |f(v)|^{(\alpha-1)p'} \big) \right]^{1/p'}\\
  = 2 \left[ \sum_{u \in v} |f(u)|^{(\alpha-1)p'}d_u \right]^{1/p'} \le 2 d_\mu^{1/p'} \left[ \sum_{u \in v} |f(u)|^{(\alpha-1)p'}\mu(u) \right]^{1/p'}.
\end{multline*}
Set $\alpha = (\delta-1)p/(\delta-p)$, so that $(\alpha-1)p' = \delta p /(\delta-p)= \alpha\delta/(\delta-1)$. Then
\begin{multline*}
\left[  \sum_{u \in V}  \big|  f(u)  \big|^{\frac{\delta p}{\delta-p}}  \mu(u) \right]^{\frac{\delta-1}{\delta}} \\
 \le 2 d_\mu^{1/p'} \frac{(\delta-1)p}{\delta-p}\frac{3}{c_\delta}  \left[ \sum_{\{u,v\}\in E}w_{uv}\, \left|f(u)-s_{uv}f(v)\right|^p \right]^{1/p} \left[ \sum_{u \in v} |f(u)|^{\frac{\delta p}{\delta-p}}\mu(u) \right]^{1/p'}.
\end{multline*}
Noticing that $q = \delta p/(\delta-p)$ and $(\delta-1)/\delta-1/p' = 1/q$, we obtain
\begin{multline*}
\left[  \sum_{u \in V}  \big|  f(u)  \big|^{q}  \mu(u) \right]^{1/q} \\
 \le 2 d_\mu^{1/p'} \frac{(\delta-1)p}{\delta-p}\frac{3}{c_\delta}  \left[ \sum_{\{u,v\}\in E}w_{uv}\, \left|f(u)-s_{uv}f(v)\right|^p \right]^{1/p}. 
 \end{multline*}
\end{proof}

\subsection{Sobolev inequalities with signed Cheeger constants}

As already mentioned, the Cheeger constant from \cite[Def. 3.5]{LLPP} corresponds to the case $\delta=\infty$ in our Definition  \ref{def:isoperimetric constant}.
The following theorem is the corresponding analogue of Theorem \ref{thm:Sobolev-p=1}, and the proof is essentially the same so we omit it. 

\begin{theorem}\label{thm:Sobolev-p=1-Cheeger}
Let $G$ be a finite graph with an  $S_k^1$-valued signature $s$ such that $(G,s)$ has Cheeger constant $h=h_1^s(\mu)$.
For any function $f : V \to \C$ we have
$$
 \sum_{u \in V}  \big|  f(u)  \big| \mu(u) \le \frac{3}{h} \sum_{\{u,v\}\in E}w_{uv}\, \left|f(u)-s_{uv}f(v)\right|.
$$
\end{theorem}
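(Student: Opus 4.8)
The plan is to mimic the argument used for Theorem~\ref{thm:Sobolev-p=1}, with the isoperimetric dimension~$\delta$ replaced by $\delta=\infty$. Concretely, one first reduces to the case where $f$ is not identically zero and, after normalizing, $\max_{u\in V}|f(u)|=1$. Then one invokes the coarea inequality of Lemma~\ref{lemma:coarea} to get
$$
3\sum_{\{u,v\}\in E}w_{uv}\,\left|f(u)-s_{uv}f(v)\right|\ \ge\ \int_0^1 \iota^s\big(\{|f|\ge t\}\big)+\big|E\big(\{|f|\ge t\},\{|f|\ge t\}^c\big)\big|\,dt.
$$
Now the defining property of the Cheeger constant $h=h_1^s(\mu)$ from \cite[Def.~3.5]{LLPP} is precisely the $\delta=\infty$ instance of \eqref{eq:isoperimetric constant}, namely $\iota^s(V_1)+|E(V_1,V_1^c)|\ge h\,\vol_\mu(V_1)$ for every nonempty $V_1\subseteq V$; applying this with $V_1=\{|f|\ge t\}$ for each $t\in(0,1]$ (note this set is nonempty since $\max|f|=1$) and integrating gives
$$
3\sum_{\{u,v\}\in E}w_{uv}\,\left|f(u)-s_{uv}f(v)\right|\ \ge\ h\int_0^1 \vol_\mu\big(\{|f|\ge t\}\big)\,dt.
$$

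The final step is to identify $\int_0^1 \vol_\mu(\{|f|\ge t\})\,dt$ with $\sum_{u\in V}|f(u)|\mu(u)$. This is the elementary layer-cake (Cavalieri) identity: writing $\vol_\mu(\{|f|\ge t\})=\sum_{u\in V}\chi_{\{|f|\ge t\}}(u)\mu(u)$, interchanging the finite sum with the integral, and using $\int_0^1\chi_{\{|f(u)|\ge t\}}\,dt=|f(u)|$ (valid because $|f(u)|\le 1$), one obtains exactly $\sum_{u\in V}|f(u)|\mu(u)$. Here no Minkowski inequality is needed — it collapses to a plain Fubini interchange since the exponent $\frac{\delta-1}{\delta}$ becomes $1$. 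Rearranging yields the claimed bound with constant $3/h$.

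There is no real obstacle in this argument; the only thing to double-check is the harmless edge case where $\max_{u\in V}|f(u)|$ equals $0$ (nothing to prove) and the scaling: since both sides of the desired inequality are homogeneous of degree one in $f$, normalizing to $\max|f|=1$ costs nothing, which is why Lemma~\ref{lemma:coarea} applies. This is indeed why the authors say ``the proof is essentially the same so we omit it'': the structure is line-for-line that of Theorem~\ref{thm:Sobolev-p=1}, only simpler.
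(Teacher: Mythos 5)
Your proof is correct and is exactly the argument the paper intends when it says the proof is "essentially the same" as Theorem~\ref{thm:Sobolev-p=1}: normalize, apply Lemma~\ref{lemma:coarea}, use the Cheeger constant (the $\delta=\infty$ isoperimetric inequality), and close with the layer-cake identity, which replaces the Minkowski step since the exponent is $1$.
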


Using the same technique as in the proof of Theorem \ref{thm:Sobolev}, we can then deduce the following.

\begin{theorem}\label{thm:Sobolev-Cheeger}
Let $G$ be a finite graph with an  $S_k^1$-valued  signature $s$ such that $(G,s)$ has Cheeger constant $h=h_1^s(\mu)$, and let $p \ge 1$.
For any function $f : V \to \C$ we have
$$
\left[ \sum_{u \in V}  \big|  f(u)  \big|^p \mu(u) \right]^{1/p} \le \frac{6p d_\mu^{1/p'}}{h} \left[ \sum_{\{u,v\}\in E}w_{uv}\, \left|f(u)-s_{uv}f(v)\right|^p \right]^{1/p}.
$$
\end{theorem}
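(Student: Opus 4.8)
The plan is to follow verbatim the strategy used for Theorem~\ref{thm:Sobolev}, substituting Theorem~\ref{thm:Sobolev-p=1-Cheeger} (the $\delta=\infty$ instance of the $p=1$ Sobolev inequality) for Theorem~\ref{thm:Sobolev-p=1}. First I would dispose of two easy cases. If $f\equiv 0$ there is nothing to prove, so assume from now on that $S:=\sum_{u\in V}|f(u)|^p\mu(u)$ is a positive (and, since $V$ is finite, finite) real number. If $p=1$ the claimed inequality is exactly Theorem~\ref{thm:Sobolev-p=1-Cheeger}, whose constant $3/h$ is even smaller than the asserted $6p\, d_\mu^{1/p'}/h$ (note $1/p'=0$ when $p=1$). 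So from here on assume $p>1$.

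Next I would set $\alpha:=p\ge 1$ and consider the function $f^\alpha$ in the sense defined just before Lemma~\ref{lemma:complex-bernoulli}. Applying Theorem~\ref{thm:Sobolev-p=1-Cheeger} to $f^\alpha$ gives
$$
\sum_{u\in V}|f(u)|^\alpha\mu(u) \ \le\ \frac{3}{h}\sum_{\{u,v\}\in E}w_{uv}\,\big| [f(u)]^\alpha - s_{uv}[f(v)]^\alpha \big|.
$$
Because $|s_{uv}|=1$ we have $s_{uv}[f(v)]^\alpha=(s_{uv}f(v))^\alpha$ and $|s_{uv}f(v)|=|f(v)|$, so Lemma~\ref{lemma:complex-bernoulli} applied with $z_1=f(u)$, $z_2=s_{uv}f(v)$ bounds the right-hand side above by
$$
\frac{3\alpha}{h}\sum_{\{u,v\}\in E}w_{uv}\,\big| f(u)-s_{uv}f(v) \big|\,\big( |f(u)|^{\alpha-1}+|f(v)|^{\alpha-1} \big).
$$

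From here the argument is identical to the proof of Theorem~\ref{thm:Sobolev}: apply H\"older's inequality with exponents $p$ and $p'$ to peel off the factor $\big[\sum_{\{u,v\}}w_{uv}|f(u)-s_{uv}f(v)|^p\big]^{1/p}$; use $(a+b)^{p'}\le 2^{p'}(a^{p'}+b^{p'})$ for $a,b\ge 0$; and rewrite $\sum_{\{u,v\}\in E}w_{uv}\big(|f(u)|^{(\alpha-1)p'}+|f(v)|^{(\alpha-1)p'}\big)=\sum_{u\in V}|f(u)|^{(\alpha-1)p'}d_u\le d_\mu\sum_{u\in V}|f(u)|^{(\alpha-1)p'}\mu(u)$. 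The choice $\alpha=p$ is made precisely so that $(\alpha-1)p'=p=\alpha$, whence the last sum is exactly $S$. Assembling the estimates yields
$$
S \ \le\ \frac{6p\, d_\mu^{1/p'}}{h}\left[\sum_{\{u,v\}\in E}w_{uv}\,|f(u)-s_{uv}f(v)|^p\right]^{1/p}S^{1/p'},
$$
and dividing by $S^{1/p'}$ and using $1-1/p'=1/p$ gives the stated inequality.

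The only subtlety — and the mild ``obstacle'' here — is the self-referential form of the penultimate display, in which the quantity to be bounded reappears on the right; one must observe that $S$ is finite (because $V$ is finite) and nonzero (because $f\not\equiv 0$) in order to legitimately cancel it. Every other step is routine bookkeeping of exponents, already carried out in detail in the proof of Theorem~\ref{thm:Sobolev}, so in the write-up I would simply point to that proof for the intermediate manipulations rather than repeating them.
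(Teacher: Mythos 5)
Your proposal is correct and is exactly the argument the paper has in mind: the paper simply states that Theorem~\ref{thm:Sobolev-Cheeger} follows ``using the same technique as in the proof of Theorem~\ref{thm:Sobolev},'' and your write-up carries out precisely that substitution, replacing Theorem~\ref{thm:Sobolev-p=1} with Theorem~\ref{thm:Sobolev-p=1-Cheeger} and taking $\alpha=p$ (the $\delta\to\infty$ limit of $\alpha=(\delta-1)p/(\delta-p)$), which correctly reproduces the constant $6p\,d_\mu^{1/p'}/h$. Your remark about needing $0<S<\infty$ before cancelling $S^{1/p'}$ is a valid and appropriately noted detail, already implicit in the proof of Theorem~\ref{thm:Sobolev}.
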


\begin{remark}
The case $p=2$ of Theorem \ref{thm:Sobolev-Cheeger} is essentially contained in the proof of \cite[Lemma 4.4]{LLPP}, though our factor of 6 is slightly worse than theirs.
\end{remark}

The constant in Theorem \ref{thm:Sobolev-p=1-Cheeger} is essentially optimal, as shown by the following result. Compare to \cite[Cor. 1]{Chung-Tetali}.

\begin{corollary}\label{cor:variational-characterization-Cheeger}
Let $G$ be a finite graph with an  $S_k^1$-valued signature $s$ such that $(G,s)$ has Cheeger constant $h=h_1^s(\mu)$.
Then
$$
\frac{h}{3} \le \inf_{f \not= 0} \left[ \frac{ \sum_{\{u,v\}\in E}w_{uv}\, \left|f(u)-s_{uv}f(v)\right| }{ \sum_{u \in V}  \big|  f(u)  \big| \mu(u) } \right] \le h
$$
\end{corollary}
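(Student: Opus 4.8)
The plan is to mirror the proof of Corollary \ref{cor:variational-characterization-isoperimetric-constant}, with the Cheeger constant $h$ playing the role of $c_\delta$ and Theorem \ref{thm:Sobolev-p=1-Cheeger} playing the role of Theorem \ref{thm:Sobolev-p=1}. The lower bound is immediate: Theorem \ref{thm:Sobolev-p=1-Cheeger} asserts that for every $f : V \to \C$,
$$
\sum_{u \in V} |f(u)|\,\mu(u) \le \frac{3}{h} \sum_{\{u,v\}\in E} w_{uv}\,|f(u)-s_{uv}f(v)|,
$$
so for every $f \neq 0$ the displayed ratio is at least $h/3$, and hence so is the infimum over all such $f$.

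For the upper bound I would exhibit an optimal test function. Since $G$ is finite, there is a nonempty $V_1 \subseteq V$ at which the infimum defining $h = h_1^s(\mu)$ is attained, that is $\iota^s(V_1) + |E(V_1, V_1^c)| = h\,\vol_\mu(V_1)$, and by Definition \ref{def:frustration index} there is a switching function $\tau : V_1 \to S^1_k$ with $\iota^s(V_1) = \sum_{\{u,v\}\in E_1} w_{uv}\,|\tau(u)-s_{uv}\tau(v)|$, where $(V_1,E_1)$ is the induced subgraph. Extend $\tau$ by zero off $V_1$ and put $f = \chi_{V_1}\tau$. Since $|\tau(u)| = 1$ for $u \in V_1$, the denominator is $\sum_{u \in V} |f(u)|\,\mu(u) = \vol_\mu(V_1)$.

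It then remains to evaluate the numerator $\sum_{\{u,v\}\in E} w_{uv}\,|f(u)-s_{uv}f(v)|$ by splitting the edge set exactly as in the proof of Lemma \ref{lemma:coarea}: edges with both endpoints in $V_1$ contribute $\iota^s(V_1)$ by the choice of $\tau$; an edge $\{u,v\}$ with $u \in V_1$ and $v \in V_1^c$ contributes $w_{uv}\,|\tau(u)| = w_{uv}$ because $f(v)=0$ and $|s_{uv}|=1$, so these sum to $|E(V_1,V_1^c)|$; and edges with both endpoints in $V_1^c$ contribute $0$. Thus the numerator equals $\iota^s(V_1) + |E(V_1,V_1^c)|$, the ratio for this particular $f$ equals $h$, and the infimum is therefore at most $h$. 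There is no genuine obstacle here; the only points needing a little care are the existence of a minimizing subset (automatic for a finite graph), the fact that $|\tau|\equiv 1$ makes the volume term come out cleanly, and counting the between-classes edges with the correct weight — precisely the bookkeeping already performed in the proof of Lemma \ref{lemma:coarea}.
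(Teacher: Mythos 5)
Your proof is correct and follows exactly the paper's own argument: the lower bound from Theorem \ref{thm:Sobolev-p=1-Cheeger}, and the upper bound by testing with $f=\chi_{V_1}\tau$ where $V_1$ attains the Cheeger constant and $\tau$ attains the frustration index of $V_1$. The extra bookkeeping you spell out for the edge-sum decomposition is accurate and simply makes explicit what the paper leaves implicit.
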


\begin{proof}
The lower bound follows immediately from Theorem \ref{thm:Sobolev-p=1-Cheeger}, whereas to prove the upper bound it suffices to choose $f = \chi_{V_1} \tau$, where $V_1 \subset V$ is a subset achieving equality in the definition of the Cheeger constant, and $\tau$ is a switching function achieving equality in the definition of $\iota^s(V_1)$.
In fact, for this function the quotient of interest equals $h$.
\end{proof}

\subsection{What about the balanced case?}

Recall that in the balanced case, the $1$-way Cheeger constant will be zero.
It is thus reasonable to hope that the $2$-way Cheeger constant might yield Sobolev inequalities.
But for a balanced signature this is just the classical Cheeger constant, and we cannot hope to do better than what is already known in the case without signatures.

Suppose that $G$ is a finite graph with balanced $S^1_k$-valued signature $s$. Therefore, there exists a switching function $\tau : V \to S_k^1$ such that
 whenever $u \sim v$ we have ${\tau(u)^{-1} s_{uv}\tau(v) = 1}$. Thus, for any $f : V \to \C$ and $u \sim v$
\begin{multline*}
\left|f(u)-s_{uv}f(v)\right| \\
= \left|\tau(u)^{-1} f(u)- \tau(u)^{-1}s_{uv}f(v)\right| = \left|\tau(u)^{-1} f(u)- \tau(v)^{-1}f(v)\right|
\end{multline*}
and thus we cannot expect to do any better than the classical theory already does for the function $\tau^{-1} f$, namely:
if $1 \le p < \delta$, the graph $G$ has isoperimetric dimension $\delta$ with constant $c_\delta$ (in the sense of \cite{ChungYau95,Ostrovskii}), and if
we define $q$ by $1/p = 1/q + 1/\delta$,
then there exists a constant $C$ depending only on $p$, $\delta$, $c_\delta$ and $d_\mu$ such that
for any function $f : V \to \C$ we have
$$
\inf_{z\in \C} \left[  \sum_{u \in V}  \big|  f(u) - z \tau(u)  \big|^{q}  \mu(u) \right]^{1/q} \le C \left[ \sum_{\{u,v\}\in E}w_{uv}\, \left|f(u)-s_{uv}f(v)\right|^p \right]^{1/p}.
$$

\section{Signed Cheeger constants for cartesian products}\label{sec:products}

For two weighted graphs with vertex measures $G_1 = (V_1,E_1,w^1, \mu_1)$ and $G_2 = (V_2,E_2,w^2, \mu_2)$ with corresponding $\Gamma$-valued signatures $s^1$ and $s^2$, we define their signed Cartesian product $G_1 \times G_2$ as the following graph:
the vertex set is $V= V_1 \times V_2$, the edge set $E$ determined by
$$
(u,v) \sim (u',v') \Leftrightarrow \big[ \text{ $u=u'$ and $v \sim v'$, or $v=v'$ and $u \sim u'$ } \big],
$$
the product weight $w$ is defined by
$$
w_{(u,v)(u',v')} = \begin{cases}
w^2_{vv'}\mu_1(u) &\text{if $u=u'$ and $v \sim v'$},\\
w^1_{uu'}\mu_2(v) &\text{if $v=v'$ and $u \sim u'$},
\end{cases}
$$
the vertex measure is the product measure $\mu = \mu_1 \times \mu_2$,
and the product signature $s = s^1 \times s^2$ is given by
$$
s_{(u,v)(u',v')} = \begin{cases}
s^2_{vv'} &\text{if $u=u'$ and $v \sim v'$},\\
s^1_{uu'} &\text{if $v=v'$ and $u \sim u'$}.
\end{cases}
$$
Note that this is the standard definition of the Cartesian product of graphs, and we just have added the natural choice for a product signature.

In contrast with the classical case \cite[Thms. 2 and 3]{Chung-Tetali},
the signed Cheeger constant behaves additively with respect to this cartesian product.

\begin{theorem}\label{thm:Cheeger-constant-of-product}
Let $(G_1,s^1), \dotsc, (G_m,s^m)$ be weighted graphs with $S^1_k$-valued signatures and corresponding vertex measures $\mu_1, \dotsc, \mu_m$.
For convenience we write $h(G_j)=h_1^{s^j}(\mu_j)$ and $h(G_1 \times \cdots \times G_k) = h_1^{s^1 \times \cdots \times s^k }( \mu_1 \times \cdots \times \mu_k )$.
Then
$$
 \frac{1}{3} \sum_{j=1}^m h(G_j) \le h( G_1 \times \cdots \times G_m ) \le 3 \sum_{j=1}^m h(G_j).
$$
\end{theorem}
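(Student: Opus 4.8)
The plan is to deduce the theorem from the variational characterization of the signed Cheeger constant in Corollary~\ref{cor:variational-characterization-Cheeger}. Given a weighted graph with signature and vertex measure, write $\|g\|_{1,\nu} := \sum_{x}|g(x)|\,\nu(x)$ and
$$
\lambda(G) := \inf_{f\neq 0} \frac{ \sum_{\{u,v\}\in E}w_{uv}\, \left|f(u)-s_{uv}f(v)\right| }{ \|f\|_{1,\mu} },
$$
so that Corollary~\ref{cor:variational-characterization-Cheeger} reads $h(G)/3 \le \lambda(G) \le h(G)$. The heart of the matter is the \emph{exact} identity
$$
\lambda(G_1 \times \cdots \times G_m) = \sum_{j=1}^m \lambda(G_j),
$$
from which the theorem is immediate, since
$$
h(G_1\times\cdots\times G_m) \le 3\,\lambda(G_1\times\cdots\times G_m) = 3\sum_{j=1}^m \lambda(G_j) \le 3\sum_{j=1}^m h(G_j),
$$
and likewise $h(G_1\times\cdots\times G_m) \ge \lambda(G_1\times\cdots\times G_m) = \sum_{j=1}^m \lambda(G_j) \ge \frac{1}{3}\sum_{j=1}^m h(G_j)$.

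To prove $\lambda(G_1\times\cdots\times G_m) \ge \sum_{j}\lambda(G_j)$, I would fix an arbitrary nonzero $f:V_1\times\cdots\times V_m\to\C$ and partition the edges of the product according to the unique coordinate $j\in[m]$ in which the two endpoints of an edge differ. For each fixed choice of the remaining (\emph{frozen}) coordinates $(x_i)_{i\neq j}$, the direction-$j$ edges involving them form a rescaled copy of $G_j$: they carry the signature $s^j$ and the weights $w^j_{x_jx_j'}\prod_{i\neq j}\mu_i(x_i)$. Applying the definition of $\lambda(G_j)$ to the slice $x_j\mapsto f(x_1,\dots,x_m)$ (trivially $0\ge 0$ when that slice vanishes identically) and then summing over the frozen coordinates shows that the total contribution of all direction-$j$ edges to the numerator of the quotient defining $\lambda(G_1\times\cdots\times G_m)$ is at least $\lambda(G_j)\,\|f\|_{1,\mu}$; summing over $j=1,\dots,m$ yields the claim.

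For the reverse inequality I would test with a product function: given $f_j:V_j\to\C$ nearly optimal for $\lambda(G_j)$, set $f(x_1,\dots,x_m)=\prod_{j=1}^m f_j(x_j)$. Organizing the edges by direction exactly as above and pulling the moduli of the frozen coordinates out of each summand, one checks that the numerator of the quotient at $f$ equals $\sum_j\big(\prod_{i\neq j}\|f_i\|_{1,\mu_i}\big)N_j(f_j)$ and its denominator equals $\prod_{j}\|f_j\|_{1,\mu_j}$, where $N_j(g):=\sum_{\{x,x'\}\in E_j}w^j_{xx'}|g(x)-s^j_{xx'}g(x')|$; hence the quotient collapses to $\sum_j N_j(f_j)/\|f_j\|_{1,\mu_j}$, and letting each term tend to $\lambda(G_j)$ gives $\lambda(G_1\times\cdots\times G_m)\le\sum_j\lambda(G_j)$.

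The only genuinely delicate point is the bookkeeping in these two computations — verifying that the product of frozen vertex-measures appearing as the weight on a direction-$j$ edge is exactly the scalar needed to turn the slice into a faithfully rescaled copy of $G_j$, so that the various weighted $\ell^1$-norms telescope correctly. This is routine once the indices are set up with care. I note that no induction on $m$ is used (the factor $3$ would not survive it, giving instead a useless $3^{m-1}$), and that the argument is insensitive to whether any factor is balanced, i.e.\ to whether some $\lambda(G_j)$ vanishes.
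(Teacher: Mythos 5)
Your proof is correct and follows essentially the same route as the paper: both decompose the edges of the Cartesian product by coordinate direction, freeze the remaining coordinates to reduce to weighted copies of the factors, and test the variational quotient on a product function. The one organizational improvement worth noting is that you isolate the clean exact identity $\lambda(G_1\times\cdots\times G_m)=\sum_j\lambda(G_j)$ for the variational quantity $\lambda$ and then apply Corollary~\ref{cor:variational-characterization-Cheeger} twice; the paper instead works directly with functions achieving the Cheeger quotient exactly (the $\chi_{V_1}\tau$ optimizers constructed in the proof of that corollary) and invokes Theorem~\ref{thm:Sobolev-p=1-Cheeger} slice-wise for the lower bound, but the underlying calculations — the telescoping of weighted $\ell^1$-norms over the frozen coordinates — are the same in both arguments.
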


\begin{proof}
From the proof of Corollary \ref{cor:variational-characterization-Cheeger}, for each $j=1, \dotsc, m$  there is a nonzero function $f_j : V_j \to \C$ achieving
$$
\frac{ \sum_{\{u_j,u_j'\}\in E_j}w^j_{u_ju_j'}\, \left|f_j(u_j)-s^j_{u_ju_j'}f_j(u_j')\right| }{ \sum_{u_j \in V_j}  \big|  f_j(u)  \big| \mu_j(u_j) } = h(G_j).
$$
Let us define $F : V_1 \times \cdots V_m \to \C$ by $F(v_1, \dotsc, v_m) = f_1(v_1) \cdots f_m(v_m)$.
Then, using Corollary \ref{cor:variational-characterization-Cheeger}
\begin{multline*}
\frac{1}{3}h( G_1 \times \cdots \times G_m ) \le \\ 
\frac{ \sum_{ \{(u_1,\dotsc,u_m), (u_1',\dotsc,u_m')\} \in E } w_{(u_1,\dotsc,u_m), (u_1',\dotsc,u_m')} \left|F(u_1,\dotsc,u_m)-s_{(u_1,\dotsc,u_m), (u_1',\dotsc,u_m')}F(u_1',\dotsc,u_m')\right|   }{ \sum_{ (u_1,\dotsc,u_m) \in V}  \big|  F(u_1,\dotsc,u_m)  \big| \mu(u_1,\dotsc,u_m)  } \\
= \frac{ A_1 + \cdots + A_m  }{ \prod_{j=1}^m \left( \sum_{u_j \in V_j} |f_j(u_j)| \mu_j(u_j) \right) }
\end{multline*}
where for each $j=1, \dotsc, m$,
\begin{multline*}
A_j = \prod_{i \not= j} \bigg( \sum_{u_i \in V_i} |f_i(u_i)| \mu_i(u_i) \bigg)  \sum_{\{u_j,u_j'\}\in E_j}w^j_{u_ju_j'}\, \left|f_j(u_j)-s^j_{u_ju_j'}f_j(u_j')\right| \\
= h(G_j) \prod_{i= 1}^m \bigg( \sum_{u_i \in V_i} |f_i(u_i)| \mu_i(u_i) \bigg),
\end{multline*}
and thus we conclude
$$
\frac{1}{3}h( G_1 \times \cdots \times G_m ) \le  \sum_{j=1}^m h(G_j)
$$

From the proof of Corollary \ref{cor:variational-characterization-Cheeger},  there is a nonzero function ${f : V_1 \times \cdots \times V_m \to \C}$ whose corresponding quotient achieves the Cheeger constant for ${G_1 \times \cdots \times G_m}$,
so we can write
\begin{multline*}
h(G_1 \times \cdots \times G_m) = \\
\frac{ \sum_{ \{(u_1,\dotsc,u_m), (u_1',\dotsc,u_m')\} \in E } w_{(u_1,\dotsc,u_m), (u_1',\dotsc,u_m')} \left|f(u_1,\dotsc,u_m)-s_{(u_1,\dotsc,u_m), (u_1',\dotsc,u_m')}f(u_1',\dotsc,u_m')\right|   }{ \sum_{ (u_1,\dotsc,u_m) \in V}  \big|  f(u_1,\dotsc,u_m)  \big| \mu(u_1,\dotsc,u_m)  } \\
= \frac{ B_1 + \cdots + B_m  }{ \sum_{ (u_1,\dotsc,u_m) \in V}  \big|  f(u_1,\dotsc,u_m)  \big| \mu(u_1,\dotsc,u_m)  } 
\end{multline*}
with, for each $j=1, \dotsc, m$,
\begin{multline*}
B_j = \\
\sum_{u_i \in V_i, i \not=j} \bigg( \prod_{k \not= j} \mu_k(u_k) \bigg) \cdot\\
 \sum_{\{u_j,u_j'\}\in E_j}w^j_{u_ju_j'}\, \left|f(u_1, \dotsc, u_{j-1}, u_j, u_{j+1}, \dotsc,u_m)-s^j_{u_ju_j'}f(u_1, \dotsc, u_{j-1}, u_j', u_{j+1}, \dotsc,u_m)\right| \\
\ge  \sum_{u_i \in V_i, i \not=j}  \bigg( \prod_{k \not= j} \mu_k(u_k) \bigg) \frac{h(G_j)}{3} \sum_{u_j \in V_j} |f(u_1, \dotsc, u_m)| \mu_j(u_j),
\end{multline*}
and where we have used  Theorem \ref{thm:Sobolev-p=1-Cheeger} to obtain the inequality above.
It then follows that
$$
h(G_1 \times \cdots \times G_m) \ge \frac{1}{3} \sum_{j=1}^m h(G_j).
$$
\end{proof}

The paper \cite{LLPP}, where the signed Cheeger constant was defined, did not provide explicit examples of calculations of such constants.
We will use Theorem \ref{thm:Cheeger-constant-of-product} to evaluate signed Cheeger constants for discrete tori, but first we will need to do the simple calculation for cycles.

\begin{proposition}\label{prop:Cheeger-constant-of-cycle}
Let $\mathcal{C}=(V,E)$ be a graph which is a cycle with $n$ edges each having weight $1$, endowed with the vertex measure $\mu \equiv 1$, and let $s$ be an $S^1_k$-valued (or $S^1$-valued) signature for $\mathcal{C}$. Then $h^s(\mathcal{C}) = |1-s(\mathcal{C})|/n$.
\end{proposition}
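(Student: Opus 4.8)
The plan is to compute the defining infimum for the $1$-way Cheeger constant directly, taking advantage of the fact that a cycle is only one vertex away from being a tree. Since $\delta=\infty$ makes the exponent $\tfrac{\delta-1}{\delta}$ equal to $1$, the Cheeger constant is
$$h^s(\mathcal{C}) = \inf_{\emptyset \ne V_1 \subseteq V} \frac{\iota^s(V_1) + |E(V_1,V_1^c)|}{\vol_\mu(V_1)},$$
and here $\vol_\mu(V_1)=|V_1|$ because $\mu\equiv 1$. I would split the analysis according to whether $V_1=V$ or $V_1$ is a proper nonempty subset of $V$.

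For $V_1=V$ the boundary term is zero, and Proposition \ref{prop:frustration-of-cycle} evaluates $\iota^s(V)=|1-s(\mathcal{C})|$; this choice therefore contributes exactly $|1-s(\mathcal{C})|/n$, giving the upper bound $h^s(\mathcal{C}) \le |1-s(\mathcal{C})|/n$. For a proper nonempty $V_1$, the induced subgraph is obtained by deleting at least one vertex from a cycle, hence is a disjoint union of paths; as observed just before Proposition \ref{prop:frustration-of-cycle}, the frustration index of a signature on a forest vanishes (the minimization in Definition \ref{def:frustration index} decouples over connected components, each of which is a tree), so $\iota^s(V_1)=0$. A short combinatorial count then shows $|E(V_1,V_1^c)|=2\ell$, where $\ell\ge 1$ is the number of maximal arcs into which $V_1$ breaks along the cycle, while $|V_1|\le n-1$; thus the ratio is at least $\tfrac{2}{n-1}$ for every proper nonempty $V_1$.

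To conclude I would compare the two cases using the elementary bound $|1-s(\mathcal{C})|\le 2$, valid because $s(\mathcal{C})\in S^1$, together with $n\ge 3$: this yields $\tfrac{|1-s(\mathcal{C})|}{n}\le \tfrac{2}{n} < \tfrac{2}{n-1}$, so no proper subset can do better than $V_1=V$, and the infimum equals $|1-s(\mathcal{C})|/n$, attained at $V_1=V$.

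I do not expect a genuine obstacle here; the two points that require a little care are the fact that removing a vertex from a cycle leaves a forest (so the frustration index contributes nothing for proper subsets) and the elementary estimate $|E(V_1,V_1^c)|\ge 2$ for every proper nonempty $V_1$. Everything else is the routine comparison of two explicit numbers.
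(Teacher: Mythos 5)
Your proof is correct and follows essentially the same approach as the paper: split into the cases $V_1 = V$ and $\emptyset \ne V_1 \subsetneq V$, note that proper subsets induce unions of paths so $\iota^s(V_1)=0$, bound the resulting ratio below by $2/(n-1)$, and compare with $|1-s(\mathcal{C})|/n$ via $|1-s(\mathcal{C})|\le 2$. The extra details you supply (the $2\ell$ edge-boundary count and the explicit inequality chain) simply fill in steps the paper leaves implicit.
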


\begin{proof}
Let $X$ be a nonempty subset of $V$.
If $X=V$, then using Proposition \ref{prop:frustration-of-cycle},
$$
\frac{\iota^s(X) + |E(X,X^c)|}{\vol_\mu(X)} =  \frac{ \iota^s(\mathcal{C}) }{ n } = \frac{ |1-s(\mathcal{C})| }{n}.
$$
When $\emptyset \not= X \subsetneq V$, note that $\iota^s(X) = 0$ since the graph induced by $X$ is a disjoint union of paths. Therefore,
$$
\frac{\iota^s(X) + |E(X,X^c)|}{\vol_\mu(X)} =  \frac{ |E(X,X^c)| }{ |X| } \ge \frac{2}{n-1}.
$$
Since
$$
\frac{2}{n-1} \ge\frac{ |1-s(\mathcal{C})| }{n},
$$
the result follows.
\end{proof}

\begin{remark}\label{rem:isoperimetric-constant-of-cycle}
The same argument shows that for any $\delta>1$, such a cycle $\mathcal{C}$ has isoperimetric dimension $\delta$ with isoperimetric constant ${c_\delta = |1-s(\mathcal{C})|/n^{(\delta-1)/\delta}}$.
\end{remark}

We can now easily calculate signed Cheeger constants for discrete tori, simply by putting together Theorem \ref{thm:Cheeger-constant-of-product} and Proposition \ref{prop:Cheeger-constant-of-cycle}.

\begin{corollary}
Let $\mathcal{C}_1, \dotsc, \mathcal{C}_m$ be cycles of respective lengths $n_1, \dotsc, n_m$, endowed with respective $S^1_k$-valued signatures $s^1, \dotsc, s^m$, and with edge weights and vertex measures that are all equal to $1$. Then
$$
 \frac{1}{3} \sum_{j=1}^m \frac{|1-s^j(\mathcal{C}_j)|}{n_j} \le h( \mathcal{C}_1 \times \cdots \times \mathcal{C}_m ) \le 3 \sum_{j=1}^m \frac{|1-s^j(\mathcal{C}_j)|}{n_j}.
$$
\end{corollary}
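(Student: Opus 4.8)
The plan is to simply combine the two main results already in hand: the additivity of the signed Cheeger constant under signed Cartesian products (Theorem \ref{thm:Cheeger-constant-of-product}) and the explicit evaluation of the signed Cheeger constant of a single cycle (Proposition \ref{prop:Cheeger-constant-of-cycle}). There is no new idea required; the corollary is a direct substitution.

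First I would check that the hypotheses of Theorem \ref{thm:Cheeger-constant-of-product} apply: each $\mathcal{C}_j$ is a finite weighted graph (with all weights $1$) carrying an $S^1_k$-valued signature $s^j$ and a vertex measure ($\mu_j \equiv 1$), so the theorem yields the two-sided estimate
$$
\frac{1}{3} \sum_{j=1}^m h(\mathcal{C}_j) \le h(\mathcal{C}_1 \times \cdots \times \mathcal{C}_m) \le 3 \sum_{j=1}^m h(\mathcal{C}_j),
$$
where on the left $h(\mathcal{C}_j) = h_1^{s^j}(\mu_j)$ and on the right $h(\mathcal{C}_1 \times \cdots \times \mathcal{C}_m)$ is the signed Cheeger constant of the signed Cartesian product. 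The one bookkeeping point worth spelling out is that, because every $\mu_j$ is identically $1$, the product weights defined in Section \ref{sec:products} all equal $1$ (e.g. $w^2_{vv'}\mu_1(u) = 1$) and the product measure $\mu_1 \times \cdots \times \mu_m$ is identically $1$ as well; hence $\mathcal{C}_1 \times \cdots \times \mathcal{C}_m$ is exactly the standard discrete torus with unit edge weights and unit vertex measure, and its Cheeger constant in the sense of this corollary coincides with the quantity appearing in Theorem \ref{thm:Cheeger-constant-of-product}.

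Next I would apply Proposition \ref{prop:Cheeger-constant-of-cycle} to each factor, which gives $h(\mathcal{C}_j) = |1 - s^j(\mathcal{C}_j)|/n_j$. Substituting this into the displayed two-sided estimate immediately produces
$$
\frac{1}{3} \sum_{j=1}^m \frac{|1 - s^j(\mathcal{C}_j)|}{n_j} \le h(\mathcal{C}_1 \times \cdots \times \mathcal{C}_m) \le 3 \sum_{j=1}^m \frac{|1 - s^j(\mathcal{C}_j)|}{n_j},
$$
which is precisely the claim. I do not expect any genuine obstacle here: the proof is entirely a matter of invoking the two cited results and verifying the compatibility of the product construction with unit weights and unit measures.
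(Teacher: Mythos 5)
Your proof is correct and matches the paper's own approach exactly: the corollary is obtained by combining Theorem \ref{thm:Cheeger-constant-of-product} with Proposition \ref{prop:Cheeger-constant-of-cycle}, and your bookkeeping check that unit weights and measures persist under the product is the only point requiring any verification.
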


\section{The heat kernel: eigenvalue estimates for discrete magnetic laplacians involving Sobolev constants}\label{sec:heat}

Our next goal is to give bounds for the eigenvalues of the signed Laplacian depending on Sobolev constants,
analogous to the ones in \cite{ChungYau95}.
Let us start with some notation.

We consider the following (normalized) magnetic Laplacian $\lap_{\sigma,\mu}$ associated to the
weighted graph $(G,w)$ with signature $\sigma: E^{or} \to \Gamma$ and
vertex measure $\mu: V \to {\mathbb R}^+$. For any function $f: V\to
\mathbb{C}$, and any vertex $u\in V$, we define
\begin{equation} \label{eq:Deltasmu}
\lap_{\sigma,\mu}f(u):=\frac{1}{\sqrt{\mu(u)}}\sum_{v,v\sim u}w_{uv}\left(\frac{f(u)}{\sqrt{\mu(u)}}-\sigma_{uv}\frac{f(v)}{\sqrt{\mu(v)}}\right).
\end{equation}
Note that the summation in \eqref{eq:Deltasmu} over the vertices $v$
adjacent to $u$ can also be understood as a summation over the
oriented edges $e=(u,v) \in E^{or}$, and the signature is evaluated at
$(u,v)$.
To simplify the notation, we will often drop the dependance on $\mu$ from the notation and write $\lap_\sigma$ instead of $\lap_{\sigma,\mu}$.
If we simply write $\lap$, we are referring to the Laplacian corresponding to the trivial signature.

The Laplacian $\lap_{\sigma,\mu}$ has the following decomposition
$$
\lap_{\sigma,\mu}=(D_{\mu})^{-1/2}(D-A^s)(D_{\mu})^{-1/2},
$$
where $D$ and $D_{\mu}$ are the diagonal matrices with $D_{uu}=d_u$
and $(D_{\mu})_{uu}=\mu(u)$ for all $u\in V$, while $A^s$ is the
weighted signed adjacency matrix with
$$
A^s_{uv} := \begin{cases}
			0,				& \text{$u=v$ or $\{u,v\}\not\in E$},\\
			w_{uv}s_{uv},	& \text{$\{u,v\}\in E$.}
\end{cases}
$$
Note that our Laplacian is not the same one that was considered in \cite{LLPP}, since we have normalized it differently:
they considered $(D_{\mu})^{-1}(D-A^s)$ instead,
but it is easy to see that the eigenvalues are the same in our case and therefore the Cheeger inequalities from \cite{LLPP} are still valid. 
By \eqref{eq:signatureKEY}, the matrix $\lap_{\sigma,\mu}$ is Hermitian, and
hence all its eigenvalues are real which can be listed with
multiplicity as follows: 
\begin{equation}
  0\leq \lambda_1(\lap_{\sigma,\mu})\leq \lambda_2(\lap_{\sigma,\mu})\leq\cdots\leq\lambda_N(\lap_{\sigma,\mu})\leq 2d_{\mu}.
\end{equation}
For notational simplicity, we will often just write $\lambda_j$ instead of $\lambda_j(\lap_{\sigma,\mu})$.

\begin{definition}\label{defn:heat kernel}
We can express the Laplacian as
$$
\lap_\sigma = \sum_{j=1}^N \lambda_j P_j
$$
where $P_j$ is the orthogonal projection onto the span of the $j$-th normalized eigenfunction $\gamma_j$.
The \emph{heat kernel} $K^\sigma_t$ of $(G,s)$ is defined to be the $N \times N$ matrix
$$
K^\sigma_t = \sum_j e^{-\lambda_j t}P_j = e^{-t\lap_\sigma}
$$
The heat kernel corresponding to $\lap$ will be denoted by $K_t$.
\end{definition}

Some basic properties of the heat kernel, which follow directly from its definition, are stated below.

\begin{lemma}\label{lemma:properties heat kernel}
For $u, v \in V$ we have
\begin{enumerate}[(i)]
\item $K^\sigma_t$ is Hermitian, so $K^\sigma_t(u,v) = \overline{K^\sigma_t(v,u)}$.
\item\label{lemma:PHK:composition}
 For any $0 \le a \le t$,
$$
K^\sigma_t(u,v) = \sum_{z \in V} K^\sigma_a(u,z)K^\sigma_{t-a}(z,v)
$$
\item For $f : V \to \C$,
$$
K^\sigma_tf(u) = \sum_{v \in V} K^\sigma_t(u,v) f(v).
$$
\item\label{lemma:PHK:heat-eqn}
 $K^\sigma_t$ satisfies the heat equation
$$
\frac{d}{dt} K^\sigma_t = - \lap_\sigma K^\sigma_t 
$$
\item\label{lemma:PHK:unsigned}
 $K_t(u,v) \ge 0$ and $K_t \sqrt{\mu} = \sqrt{\mu}$ for any $t \ge 0$.
\end{enumerate}
\end{lemma}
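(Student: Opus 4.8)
The plan is to verify each item directly from Definition \ref{defn:heat kernel}, using the spectral decomposition $\lap_\sigma = \sum_j \lambda_j P_j$ together with the fact (established just before the lemma) that $\lap_{\sigma,\mu}$ is Hermitian with real eigenvalues. For item (i), since each $\lambda_j$ is real and each $P_j$ is an orthogonal projection, hence self-adjoint, the operator $K^\sigma_t = \sum_j e^{-\lambda_j t} P_j$ is a real linear combination of Hermitian matrices and therefore Hermitian; the entrywise statement $K^\sigma_t(u,v) = \overline{K^\sigma_t(v,u)}$ is just the matrix form of this. For item (ii), I would use that the $P_j$ are mutually orthogonal projections summing to the identity, so $K^\sigma_a K^\sigma_{t-a} = \sum_{j,\ell} e^{-\lambda_j a} e^{-\lambda_\ell (t-a)} P_j P_\ell = \sum_j e^{-\lambda_j t} P_j = K^\sigma_t$; writing out the $(u,v)$ entry of the matrix product $K^\sigma_a K^\sigma_{t-a}$ gives the stated sum over $z \in V$. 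Item (iii) is simply the definition of applying a matrix to a vector, written componentwise.

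For item (iv), differentiating $K^\sigma_t = \sum_j e^{-\lambda_j t} P_j$ term by term in $t$ yields $\frac{d}{dt} K^\sigma_t = -\sum_j \lambda_j e^{-\lambda_j t} P_j = -\big(\sum_j \lambda_j P_j\big)\big(\sum_\ell e^{-\lambda_\ell t} P_\ell\big) = -\lap_\sigma K^\sigma_t$, again using orthogonality of the projections; since the sum is finite there is no issue interchanging differentiation and summation.

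Item (v) is the only place where something beyond formal spectral manipulation is needed, and it is where I expect the main (minor) obstacle to lie. Here the signature is trivial, so $\lap = (D_\mu)^{-1/2}(D - A)(D_\mu)^{-1/2}$ with $A$ the (entrywise nonnegative) weighted adjacency matrix. I would argue that $K_t = e^{-t\lap}$ has nonnegative entries by writing $K_t = e^{-ct}e^{t(cI - \lap)}$ for a constant $c \ge 2d_\mu \ge \|\lap\|$ large enough that $cI - \lap$ has all nonnegative entries (its off-diagonal entries are those of $(D_\mu)^{-1/2}A(D_\mu)^{-1/2} \ge 0$, and its diagonal entries are $c - \lap_{uu} \ge 0$ by the choice of $c$); then $e^{t(cI-\lap)} = \sum_{n \ge 0} \frac{t^n}{n!}(cI-\lap)^n$ is a convergent sum of matrices with nonnegative entries, hence has nonnegative entries, and multiplying by the positive scalar $e^{-ct}$ preserves this. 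Finally, to see $K_t\sqrt{\mu} = \sqrt{\mu}$, I would check that $\sqrt{\mu}$ (the vector $u \mapsto \sqrt{\mu(u)}$) is an eigenfunction of $\lap$ with eigenvalue $0$: indeed $\lap\sqrt{\mu}(u) = \frac{1}{\sqrt{\mu(u)}}\sum_{v \sim u} w_{uv}\big(\frac{\sqrt{\mu(u)}}{\sqrt{\mu(u)}} - \frac{\sqrt{\mu(v)}}{\sqrt{\mu(v)}}\big) = \frac{1}{\sqrt{\mu(u)}}\sum_{v\sim u} w_{uv}(1 - 1) = 0$, so $e^{-t\lap}\sqrt{\mu} = e^{0}\sqrt{\mu} = \sqrt{\mu}$ for all $t \ge 0$.
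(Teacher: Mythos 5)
Your proof is correct, and since the paper gives no explicit argument for this lemma (it simply states the properties ``follow directly from its definition''), your spectral-decomposition verification of (i)--(iv) and your argument for (v) are exactly what is implicitly intended. One small economy: for the nonnegativity in (v) you only need $c\ge d_\mu$ (so that the diagonal entries $c-d_u/\mu(u)$ of $cI-\lap$ are nonnegative); the stronger requirement $c\ge\|\lap\|$ is not needed for that step.
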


The following is an adaptation of \cite[Lemma 1.1]{Dodziuk-Mathai} to our setting.

\begin{lemma}\label{lemma:pos-preserving}
Let $f : V \to \R$ and $\lambda > 0$. If $(\lap + \lambda I) f \ge 0$, then $f \ge 0$ (that is, the operator $(\lap +\lambda I)^{-1}$ is positivity-preserving).
\end{lemma}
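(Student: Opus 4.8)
The plan is to prove that $(\lap + \lambda I)^{-1}$ is positivity-preserving by a maximum-principle argument. Suppose $f : V \to \R$ satisfies $(\lap + \lambda I)f \ge 0$ but $f$ is not everywhere nonnegative; we derive a contradiction. Since $V$ is finite, $f$ attains its minimum at some vertex $u_0$, and by assumption $f(u_0) < 0$. First I would write out the hypothesis at $u_0$ explicitly using the definition \eqref{eq:Deltasmu} with trivial signature:
\begin{equation*}
\frac{1}{\sqrt{\mu(u_0)}}\sum_{v \sim u_0} w_{u_0 v}\left(\frac{f(u_0)}{\sqrt{\mu(u_0)}} - \frac{f(v)}{\sqrt{\mu(v)}}\right) + \lambda f(u_0) \ge 0.
\end{equation*}

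The key observation is a minor subtlety: the Laplacian in \eqref{eq:Deltasmu} is the \emph{normalized} one, so ``$f$ attains its minimum at $u_0$'' is not quite the right notion of extremum to exploit directly. The clean fix is to work with $g(u) := f(u)/\sqrt{\mu(u)}$ instead. Rewriting, $\lap g' $-type manipulations show that $(\lap + \lambda I)f \ge 0$ at $u_0$ becomes
\begin{equation*}
\sum_{v \sim u_0} w_{u_0 v}\big(g(u_0) - g(v)\big) + \lambda \mu(u_0)\, g(u_0) \ge 0,
\end{equation*}
after multiplying through by $\sqrt{\mu(u_0)} > 0$. Now I would instead let $u_0$ be a vertex where $g$ attains its minimum (equivalently, a vertex minimizing $f(u)/\sqrt{\mu(u)}$); since $\mu > 0$, if $f$ takes a negative value somewhere then $g$ does too, so $g(u_0) < 0$. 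At such a vertex each term $g(u_0) - g(v) \le 0$, so the first sum is $\le 0$, while $\lambda \mu(u_0) g(u_0) < 0$ strictly. Hence the left-hand side is strictly negative, contradicting the displayed inequality. Therefore $g \ge 0$ everywhere, and since $\sqrt{\mu} > 0$ this gives $f \ge 0$.

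Finally I would note that $(\lap + \lambda I)$ is indeed invertible for $\lambda > 0$: it is Hermitian (by \eqref{eq:signatureKEY} with trivial signature, or directly from the symmetry of the weights) with eigenvalues in $[\lambda, 2d_\mu + \lambda] \subset (0,\infty)$, so $(\lap + \lambda I)^{-1}$ exists; combined with the implication just proved, this operator is positivity-preserving, which is the assertion. The only point requiring any care is the normalization issue addressed above — choosing to extremize $f/\sqrt{\mu}$ rather than $f$ — and this is what I expect to be the one place a careless argument would slip; everything else is a routine finite-dimensional maximum principle.
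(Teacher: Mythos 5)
Your proof is correct and takes essentially the same route as the paper's: a maximum-principle argument applied at a vertex where $f/\sqrt{\mu}$ (not $f$ itself) attains its minimum, which is exactly the normalization point you flag. The paper's proof is terser but makes the identical choice of extremizing $f/\sqrt{\mu}$; your added remark about invertibility of $\lap + \lambda I$ is a harmless bonus the paper leaves implicit.
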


\begin{proof}
Suppose $(\lap + \lambda I) f \ge 0$ but $f$ attains a negative value.
Let $u \in V$ be the vertex where $\frac{1}{\sqrt{\mu}}f$ attains its minimum value. Now, by definition of the Laplacian,
$$
(\lap + \lambda I) f (u) = \lambda f(u) +  \frac{1}{\sqrt{\mu(u)}}\sum_{v,v\sim u}w_{uv}\left( \frac{f(u)}{\sqrt{\mu(u)}} - \frac{f(v)}{\sqrt{\mu(v)}} \right).
$$
The left-hand side is nonnegative by assumption, but the right-hand side is strictly negative, reaching a contradiction.
\end{proof}

Now we can get a weighted version of Kato's inequality for the magnetic Laplacian (compare to \cite[Lemma 1.2]{Dodziuk-Mathai}).

\begin{lemma}\label{lemma:Kato}
For every $f : V \to \C$, one has the pointwise inequality
$$
|f| \cdot \lap|f| \le \Re\big( \lap_\sigma f \cdot \overline{f} \big).
$$
\end{lemma}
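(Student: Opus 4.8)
\textbf{Proof proposal for Lemma~\ref{lemma:Kato}.}
The plan is to verify the inequality at a fixed but arbitrary vertex $u \in V$, expanding both sides using the definition \eqref{eq:Deltasmu} of the Laplacians. Since the inequality is trivial when $f(u) = 0$ (the left-hand side vanishes, while the right-hand side is $\Re\big(\lap_\sigma f(u)\cdot\overline{f(u)}\big) = 0$), I may assume $f(u) \neq 0$. Writing out the right-hand side,
$$
\Re\big( \lap_\sigma f(u)\cdot\overline{f(u)} \big) = \frac{1}{\sqrt{\mu(u)}}\sum_{v \sim u} w_{uv} \Re\left( \frac{|f(u)|^2}{\sqrt{\mu(u)}} - s_{uv}\frac{f(v)\overline{f(u)}}{\sqrt{\mu(v)}} \right) = \frac{|f(u)|^2}{\mu(u)}\!\sum_{v\sim u}\! w_{uv} - \frac{1}{\sqrt{\mu(u)}}\sum_{v\sim u} w_{uv}\,\Re\!\left( s_{uv}\frac{f(v)\overline{f(u)}}{\sqrt{\mu(v)}} \right),
$$
and similarly the left-hand side is
$$
|f(u)|\cdot\lap|f|(u) = \frac{|f(u)|}{\sqrt{\mu(u)}}\sum_{v\sim u} w_{uv}\left( \frac{|f(u)|}{\sqrt{\mu(u)}} - \frac{|f(v)|}{\sqrt{\mu(v)}} \right) = \frac{|f(u)|^2}{\mu(u)}\sum_{v\sim u} w_{uv} - \frac{1}{\sqrt{\mu(u)}}\sum_{v\sim u} w_{uv}\,\frac{|f(u)|\,|f(v)|}{\sqrt{\mu(v)}}.
$$
After cancelling the common first term, the desired inequality reduces to the edgewise (indeed termwise) estimate
$$
\Re\!\left( s_{uv}\,f(v)\overline{f(u)} \right) \le |f(u)|\,|f(v)| \qquad \text{for every } v \sim u,
$$
which holds since $|s_{uv}| = 1$ gives $\big|s_{uv}f(v)\overline{f(u)}\big| = |f(u)|\,|f(v)|$, and the real part of a complex number never exceeds its modulus.

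There is no real obstacle here; the only point requiring a little care is the sign bookkeeping when passing from the full sums to the termwise comparison, and the observation that the inequality one needs on each edge is insensitive to the phase $s_{uv}$ precisely because $|s_{uv}|=1$. This is the discrete analogue of the classical Kato inequality, and the magnetic phase, far from being an obstruction, simply gets absorbed into the modulus. I would present the computation in the compressed form above, remarking that the case $f(u)=0$ is handled separately.
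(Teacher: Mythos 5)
Your proof is correct and follows essentially the same route as the paper: expand both Laplacians at a vertex $u$, cancel the common $|f(u)|^2 d_u/\mu(u)$ term, and reduce to the edgewise inequality $\Re\big(s_{uv}f(v)\overline{f(u)}\big)\le|f(u)||f(v)|$, which holds because $\Re z\le|z|$ and $|s_{uv}|=1$. One minor observation: the separate treatment of the case $f(u)=0$ is harmless but unnecessary, since the termwise estimate already covers it.
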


\begin{proof}
Let $u \in V$. From the definitions of the Laplacians,
\begin{align*}
(|f| \cdot \lap|f|)(u) &=  \frac{1}{\sqrt{\mu(u)}}\sum_{v,v\sim u}w_{uv}\left(\frac{|f(u)|^2}{\sqrt{\mu(u)}}-\frac{|f(u)|\cdot|f(v)|}{\sqrt{\mu(v)}}\right) \\
\big( \lap_\sigma f \cdot \overline{f} \big)(u) &= \frac{1}{\sqrt{\mu(u)}}\sum_{v,v\sim u}w_{uv}\left(\frac{ |f(u)|^2  }{\sqrt{\mu(u)}}-\sigma_{uv}\frac{ \overline{f(u)} f(v)}{\sqrt{\mu(v)}}\right)
\end{align*}
and therefore
\begin{multline*}
(|f| \cdot \lap|f|)(u) - \Re\big( \lap_\sigma f \cdot \overline{f} \big)(u) = \\
\frac{1}{\sqrt{\mu(u) }}\sum_{v,v\sim u} \frac{w_{uv} }{ \sqrt{\mu(v)}} \Re\big[ \sigma_{uv} \overline{f(u)} f(v) - |f(u)| \cdot |f(v)| \big] \le 0.
\end{multline*}
\end{proof}

As a consequence we get a domination result between the heat kernels, a weighted version of \cite[Thm. 1.5]{Dodziuk-Mathai}.

\begin{theorem}\label{thm:heat-kernel-domination}
For any signature $\sigma$ and any $f : V \to \C$, we have the pointwise inequality
$$
| e^{-t \lap_\sigma} f | \le e^{-t \lap } |f|.
$$
As a consequence, for any $f,g : V \to C$ we have
$$
\big| \pair{ e^{-t \lap_\sigma} f }{ g} \big| \le \pair{ e^{-t \lap_\sigma} |f| }{ |g|}
$$
and for any $u,v \in V$ we have
$$
|K^\sigma_t(u,v)| \le K_t(u,v)
$$
\end{theorem}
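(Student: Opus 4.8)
\emph{Proof proposal.} The plan is to reduce the exponential statement to a resolvent comparison, then pass to the limit via the finite-dimensional Yosida formula; the two displayed consequences follow at once.

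\textbf{Step 1: a resolvent Kato inequality.} First I would fix $\mu>0$ and $f:V\to\C$ and show that $\big|(\lap_\sigma+\mu I)^{-1}f\big|\le(\lap+\mu I)^{-1}|f|$ pointwise. Since $\lap_\sigma$ is Hermitian and positive semidefinite, $\lap_\sigma+\mu I$ is invertible; put $g=(\lap_\sigma+\mu I)^{-1}f$, so $(\lap_\sigma+\mu I)g=f$. Multiplying by $\overline g$ and taking real parts vertexwise gives $\mu|g|^2+\Re(\lap_\sigma g\cdot\overline g)=\Re(f\overline g)\le|f|\,|g|$, and Kato's inequality (Lemma~\ref{lemma:Kato}) supplies $|g|\lap|g|\le\Re(\lap_\sigma g\cdot\overline g)$, so $|g|\big((\lap+\mu I)|g|-|f|\big)\le 0$. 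At a vertex where $g\ne 0$ I divide by $|g|$ to get $(\lap+\mu I)|g|\le|f|$; at a vertex $u$ with $g(u)=0$ I check directly that $(\lap+\mu I)|g|(u)=\lap|g|(u)=\tfrac{1}{\sqrt{\mu(u)}}\sum_{v\sim u}w_{uv}\big(0-|g(v)|/\sqrt{\mu(v)}\big)\le 0\le|f(u)|$. Hence $(\lap+\mu I)\big[(\lap+\mu I)^{-1}|f|-|g|\big]\ge 0$ everywhere, and positivity-preservation of $(\lap+\mu I)^{-1}$ (Lemma~\ref{lemma:pos-preserving}) yields $|g|\le(\lap+\mu I)^{-1}|f|$.

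\textbf{Step 2: exponentiate and derive the consequences.} Induction on $n$, using Step 1 once and then positivity-preservation of $(\lap+\mu I)^{-1}$ on the inductive hypothesis, gives $\big|(\lap_\sigma+\mu I)^{-n}f\big|\le(\lap+\mu I)^{-n}|f|$ for all $n\ge 1$. Choosing $\mu=n/t$, writing $(\lap_\sigma+\tfrac nt I)^{-n}=(\tfrac tn)^n(I+\tfrac tn\lap_\sigma)^{-n}$, and invoking the elementary matrix limit $(I+\tfrac tn A)^{-n}\to e^{-tA}$ as $n\to\infty$ together with continuity of $z\mapsto|z|$, I obtain $|e^{-t\lap_\sigma}f|\le e^{-t\lap}|f|$; by Definition~\ref{defn:heat kernel} these are $|K^\sigma_t f|$ and $K_t|f|$. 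For the second inequality, the triangle inequality for $\pair{\cdot}{\cdot}$ (valid since its weights are positive) gives $|\pair{e^{-t\lap_\sigma}f}{g}|\le\pair{|e^{-t\lap_\sigma}f|}{|g|}$, and the pointwise bound plus $|g|\ge 0$ finishes it. For the kernel bound I apply the pointwise inequality to $f=\chi_{\{v\}}$: by Lemma~\ref{lemma:properties heat kernel}(iii), $e^{-t\lap_\sigma}\chi_{\{v\}}(u)=K^\sigma_t(u,v)$ and $e^{-t\lap}\chi_{\{v\}}(u)=K_t(u,v)$, the latter nonnegative by Lemma~\ref{lemma:properties heat kernel}(v), and since $|\chi_{\{v\}}|=\chi_{\{v\}}$ I conclude $|K^\sigma_t(u,v)|\le K_t(u,v)$.

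\textbf{Expected obstacle.} The real content is Step 1, and the only genuinely delicate point is handling the vertices where $|g|$ vanishes, so that the division by $|g|$ and the subsequent appeal to positivity-preservation are both legitimate; the rest is formal. An alternative, more analytic route would deduce the differential inequality $(\partial_t+\lap)|e^{-t\lap_\sigma}f|\le 0$ directly from Lemma~\ref{lemma:Kato} and close by a parabolic maximum principle, but that needs an $\varepsilon$-regularization (replacing $|z|$ by $\sqrt{|z|^2+\varepsilon}$) to handle the non-smoothness of the modulus — which the resolvent argument sidesteps.
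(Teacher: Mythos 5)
Your argument is correct and is essentially the paper's own proof: establish the resolvent domination $\bigl|(\lap_\sigma+\lambda I)^{-1}f\bigr|\le(\lap+\lambda I)^{-1}|f|$ from Lemma~\ref{lemma:Kato} and Lemma~\ref{lemma:pos-preserving}, iterate, and pass to the exponential via $e^{-tA}=\lim_n(I+\tfrac tn A)^{-n}$, with the two displayed consequences read off at the end. One small presentational point: you reuse the symbol $\mu$ for the resolvent parameter while the paper reserves $\mu$ for the vertex measure (and indeed both meanings collide in your line $\lap|g|(u)=\tfrac{1}{\sqrt{\mu(u)}}\sum_{v\sim u}w_{uv}\bigl(0-|g(v)|/\sqrt{\mu(v)}\bigr)$); the paper uses $\lambda$ for the resolvent parameter precisely to avoid this clash, and you should too.
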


\begin{proof}
It follows from Lemma \ref{lemma:Kato} that for any $\lambda>0$ and any $g : V \to \C$, we have the pointwise inequality
$$
|g| \cdot (\lap+\lambda I)|g| \le \Re \big[ (\lap_\sigma + \lambda I)g \cdot \overline{g} \big] \le \big|(\lap_\sigma + \lambda I)g\big| \cdot |g|,
$$
and therefore, at all vertices where $g \not=0$,
$$
(\lap+\lambda I)|g| \le \big|(\lap_\sigma + \lambda I)g\big|.
$$
At a vertex where $g(u)=0$ the left-hand side of the inequality is nonpositive whereas the right-hand side is nonnegative, so
the above inequality in fact holds in general.
Now let $g = (\lap\sigma + \lambda I)^{-1} f$, so that
$$
(\lap+\lambda I) \big |(\lap_\sigma + \lambda I)^{-1} f \big| \le |f|.
$$
By Lemma \ref{lemma:pos-preserving} the operator $(\lap + \lambda I)^{-1}$ is positivity preserving, and therefore we have, pointwise,
$$
\big |(\lap_\sigma + \lambda I)^{-1} f \big| \le (\lap+\lambda I)^{-1}|f|.
$$
By induction, for any $n\in\N$
$$
\big |(\lap_\sigma + \lambda I)^{-n} f \big| \le (\lap+\lambda I)^{-n}|f|.
$$
Since $e^{-tA} = \lim_{n\to\infty} (I + (t/n) A)^{-n}$, we obtain the first inequality.
The second inequality in the Theorem follows from the first and the definition of the inner product, whereas the inequality for the heat kernels follows from taking $f=\delta_v$, $g = \delta_u$.
\end{proof}

We are now ready to prove the main theorem of this section.

\begin{theorem}\label{thm:eigenvalue-bounds}
Suppose that $(G,\sigma)$ has isoperimetric dimension $\delta>2$ with constant $c_\delta$.
Then the eigenvalues of the Laplacian $\lap_\sigma$ satisfy, for any $t>0$,
$$
\sum_{j=1}^N e^{-\lambda_jt} \le C_\delta \frac{ \vol_\mu(G)}{t^\delta/2}
$$
where $C_\delta$ is a constant depending only on $\delta$, $c_\delta$ and $d_\mu$.
\end{theorem}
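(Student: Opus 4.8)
\emph{Overview.} The plan is to adapt the classical chain ``Sobolev inequality $\Rightarrow$ Nash inequality $\Rightarrow$ on-diagonal heat-kernel bound'' from \cite{ChungYau95}. The one genuinely new point in the magnetic setting is that the heat semigroup no longer has a nonnegative kernel, so its $\ell^1$-contractivity is not automatic; this is the step I expect to be the crux, and I would handle it using the domination Theorem~\ref{thm:heat-kernel-domination}.

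\emph{Reformulation and reduction.} Since $\sum_{j=1}^N e^{-\lambda_j t}=\tr(e^{-t\lap_\sigma})$ only depends on the spectrum, I would pass to the isospectral self-adjoint operator $L:=D_\mu^{-1}(D-A^s)$ on $\ell^2(V,\mu)$, where $\langle f,g\rangle_\mu=\sum_u f(u)\overline{g(u)}\mu(u)$; this is similar to $\lap_\sigma$ and has Dirichlet form $\langle Lf,f\rangle_\mu=\sum_{\{u,v\}\in E}w_{uv}\,|f(u)-s_{uv}f(v)|^2$, which is exactly the right-hand side of the $p=2$ Sobolev inequality. (Passing to $L$ also sidesteps the mismatch between the Hermitian matrix $K^\sigma_t$ of Definition~\ref{defn:heat kernel}, self-adjoint for the \emph{unweighted} inner product, and the $\mu$-weighted norms.) Writing $Q_t=e^{-tL}$ with $\mu$-integral kernel $q_t(u,v)$, one has $\sum_j e^{-\lambda_j t}=\tr Q_t=\sum_u q_t(u,u)\mu(u)$ with each $q_t(u,u)\ge 0$, and a Cauchy--Schwarz argument (applied to $q_t(u,v)=\langle Q_{t/2}\nu_u,Q_{t/2}\nu_v\rangle_\mu$ with $\nu_u=\mu(u)^{-1}\chi_{\{u\}}$) gives $\sup_{u,v}|q_t(u,v)|=\sup_u q_t(u,u)=\|Q_t\|_{\ell^1(\mu)\to\ell^\infty}$. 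Hence
\[
\sum_{j=1}^N e^{-\lambda_j t}\ \le\ \Big(\sup_u q_t(u,u)\Big)\vol_\mu(G)\ =\ \|Q_t\|_{\ell^1(\mu)\to\ell^\infty}\,\vol_\mu(G),
\]
so it suffices to prove the ultracontractivity bound $\|Q_t\|_{\ell^1(\mu)\to\ell^\infty}\le C_\delta\, t^{-\delta/2}$.

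\emph{Two ingredients.} First, $Q_t$ is a contraction on $\ell^2(\mu)$ (as $L\ge 0$) and on $\ell^1(\mu)$; the $\ell^1$ statement is the delicate one, and I would deduce it from Theorem~\ref{thm:heat-kernel-domination} ($|q_t(u,v)|\le q^{\mathrm{triv}}_t(u,v)$, the kernel of the trivial-signature semigroup, which is nonnegative) together with the fact that $e^{-t\lap}$ fixes constants (Lemma~\ref{lemma:properties heat kernel}\,\eqref{lemma:PHK:unsigned}), so that $Q^{\mathrm{triv}}_t$ preserves the $\ell^1(\mu)$-mass of nonnegative functions, giving $\|Q_tf\|_{\ell^1(\mu)}\le\|f\|_{\ell^1(\mu)}$. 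Second, since $\delta>2$, the case $p=2$ of Theorem~\ref{thm:Sobolev} reads $\|f\|_{\ell^q(\mu)}\le C_S\langle Lf,f\rangle_\mu^{1/2}$ with $q=2\delta/(\delta-2)$ and $C_S$ depending only on $\delta,c_\delta,d_\mu$; interpolating $\|f\|_{\ell^2(\mu)}\le\|f\|_{\ell^1(\mu)}^{2/(\delta+2)}\|f\|_{\ell^q(\mu)}^{\delta/(\delta+2)}$ and substituting Sobolev yields the Nash inequality $\langle Lf,f\rangle_\mu\ge C_S^{-2}\|f\|_{\ell^2(\mu)}^{2+4/\delta}\|f\|_{\ell^1(\mu)}^{-4/\delta}$.

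\emph{Conclusion.} With both ingredients in hand, I would run the usual ODE argument: for $f$ with $\|f\|_{\ell^1(\mu)}=1$ set $\phi(t)=\|Q_tf\|_{\ell^2(\mu)}^2$; then $\phi'(t)=-2\langle LQ_tf,Q_tf\rangle_\mu\le-2C_S^{-2}\phi(t)^{1+2/\delta}$ (using the Nash inequality and $\|Q_tf\|_{\ell^1(\mu)}\le 1$), and integrating gives $\phi(t)\le(\delta C_S^2/4)^{\delta/2}t^{-\delta/2}$, i.e. $\|Q_t\|_{\ell^1(\mu)\to\ell^2(\mu)}\le(\delta C_S^2/4)^{\delta/4}t^{-\delta/4}$. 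By self-adjointness and duality $\|Q_t\|_{\ell^2(\mu)\to\ell^\infty}=\|Q_t\|_{\ell^1(\mu)\to\ell^2(\mu)}$, and the semigroup identity $Q_{2t}=Q_tQ_t$ then gives $\|Q_t\|_{\ell^1(\mu)\to\ell^\infty}\le(\delta C_S^2/2)^{\delta/2}t^{-\delta/2}$. Combined with the reduction above this proves the theorem with $C_\delta=(\delta C_S^2/2)^{\delta/2}$, a constant depending only on $\delta$, $c_\delta$ and $d_\mu$. The heart of the matter, again, is that $\ell^1$-contractivity which in the unsigned case of \cite{ChungYau95} comes for free from positivity; everything else is the standard Faber--Krahn/Nash machinery.
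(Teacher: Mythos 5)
Your proof is correct, and it rests on the same two pillars as the paper's argument: the $p=2$ Sobolev inequality (Theorem~\ref{thm:Sobolev}) and the Kato-type domination (Theorem~\ref{thm:heat-kernel-domination}), the latter supplying the $\ell^1$-control that in the unsigned case of \cite{ChungYau95} is free from positivity. You are right that this $\ell^1$-contractivity is the crux. The difference is one of packaging. The paper never states a Nash inequality; it works directly with the diagonal of the heat kernel $K^\sigma_t(u,u)$, computes $\frac{d}{dt}K^\sigma_t(u,u)=-\sum_{v\sim z}w_{vz}\big|\mu(v)^{-1/2}K^\sigma_{t/2}(v,u)-\sigma_{vz}\mu(z)^{-1/2}K^\sigma_{t/2}(z,u)\big|^2$, applies the $p=2$ Sobolev inequality and the very same H\"older interpolation (with exponents $q-1$ and $(q-1)/(q-2)$) to the fixed function $v\mapsto\mu(v)^{-1/2}K^\sigma_{t/2}(v,u)$, and uses $\sum_v|K^\sigma_{t/2}(v,u)|\sqrt{\mu(v)}\le\sqrt{\mu(u)}$ from domination to close the ODE $\frac{d}{dt}K^\sigma_t(u,u)\le -cK^\sigma_t(u,u)^{1+2/\delta}\mu(u)^{-2/\delta}$, which it integrates pointwise and then sums over $u$. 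In effect, the paper runs your Nash iteration only on the normalized delta functions $\nu_u$, which keeps the argument self-contained and avoids the duality/semigroup-doubling step $\|Q_t\|_{\ell^1\to\ell^\infty}\le\|Q_{t/2}\|_{\ell^1\to\ell^2}^2$. What your version buys is the explicit intermediate Nash inequality and an argument formulated at the level of operator norms, which is the standard Davies--Grigor'yan ultracontractivity machine and would transfer more readily to other settings; you also pass to the $\mu$-self-adjoint $L=D_\mu^{-1}(D-A^s)$, which makes the $\ell^p(\mu)$ bookkeeping slightly cleaner than working with the conjugated matrix $K^\sigma_t$ as the paper does. Either route yields a constant $C_\delta$ depending only on $\delta$, $c_\delta$ and $d_\mu$, so both are sound; you should simply be aware that what you have rederived is, up to reorganizing the same estimates around the single vertex $u$, the paper's own proof.
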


\begin{proof}
Our general strategy is similar to that of \cite[Sec. 4]{ChungYau95}.
We start by calculating the derivative of a diagonal term of the heat kernel, using Lemma \ref{lemma:properties heat kernel}.\eqref{lemma:PHK:composition} and \eqref{lemma:PHK:heat-eqn}.
\begin{align*}
\frac{d}{dt} K^\sigma_t(u,u) &= \frac{d}{dt} \sum_{v\in V}  K^\sigma_{t/2}(u,v) K^\sigma_{t/2}(v,u) \\
&= \sum_{v\in V} \left[  \frac{d}{dt}K^\sigma_{t/2}(u,v) K^\sigma_{t/2}(v,u) + K^\sigma_{t/2}(u,v)\frac{d}{dt}  K^\sigma_{t/2}(v,u) \right] \\
&= - \frac{1}{2} \sum_{v\in V} \left[  \lap_\sigma K^\sigma_{t/2}(u,v) K^\sigma_{t/2}(v,u) + K^\sigma_{t/2}(u,v) \lap_\sigma K^\sigma_{t/2}(v,u) \right].
\end{align*}
For simplicity, let us calculate separately
\begin{multline*}
\sum_{v\in V} K^\sigma_{t/2}(u,v) \lap_\sigma K^\sigma_{t/2}(v,u) \\
= \sum_{v\in V} K^\sigma_{t/2}(u,v) \frac{1}{\sqrt{\mu(v)}} \sum_{z,z\sim v} w_{vz} \left( \frac{K^\sigma_{t/2}(v,u)}{\sqrt{\mu(v)}}-\sigma_{vz}\frac{K^\sigma_{t/2}(z,u)}{\sqrt{\mu(z)}}\right) \\
= \sum_{v \sim z} w_{vz} \left( \frac{K^\sigma_{t/2}(v,u)}{\sqrt{\mu(v)}}-\sigma_{vz}\frac{K^\sigma_{t/2}(z,u)}{\sqrt{\mu(z)}}\right) \left( \frac{K^\sigma_{t/2}(u,v)}{\sqrt{\mu(v)}}-\sigma_{zv}\frac{K^\sigma_{t/2}(u,z)}{\sqrt{\mu(z)}}\right) \\
= \sum_{v \sim z}  w_{vz} \left| \frac{K^\sigma_{t/2}(v,u)}{\sqrt{\mu(v)}}-\sigma_{vz}\frac{K^\sigma_{t/2}(z,u)}{\sqrt{\mu(z)}}\right|^2
\end{multline*}
Since $\lap_\sigma K^\sigma_{t/2}(u,v) K_{t/2}(v,u) = \overline{ K^\sigma_{t/2}(u,v) \lap_\sigma K^\sigma_{t/2}(v,u)}$, we conclude
$$
\frac{d}{dt} K^\sigma_t(u,u) = - \sum_{v \sim z} w_{vz} \left| \frac{K^\sigma_{t/2}(v,u)}{\sqrt{\mu(v)}}-\sigma_{vz}\frac{K^\sigma_{t/2}(z,u)}{\sqrt{\mu(z)}}\right|^2
$$
By Theorem \ref{thm:Sobolev}, with $q = 2\delta/(\delta-2)$,
$$
\frac{d}{dt} K^\sigma_t(u,u) \le - \frac{c_\delta^2}{144d_\mu} \left( \frac{\delta-2}{\delta-1} \right)^2 \left[  \sum_{v \in V} \bigg| \frac{K^\sigma_{t/2}(v,u)}{\sqrt{\mu(v)}} \bigg|^q \mu(v)  \right]^{2/q} 
$$
Now, by H\"older's inequality with conjugate indices $r := q-1$ and ${r' = (q-1)/(q-2)}$ applied to the functions
$$
\bigg| \frac{K^\sigma_{t/2}(v,u)}{\sqrt{\mu(v)}} \bigg|^{\frac{q}{q-1}}, \qquad \bigg| \frac{K^\sigma_{t/2}(v,u)}{\sqrt{\mu(v)}} \bigg|^{\frac{q-2}{q-1}}
$$
we obtain
\begin{multline*}
\sum_{v \in V} \bigg| \frac{K^\sigma_{t/2}(v,u)}{\sqrt{\mu(v)}} \bigg|^2 \mu(v) \\
\le \left[  \sum_{v \in V} \bigg| \frac{K^\sigma_{t/2}(v,u)}{\sqrt{\mu(v)}} \bigg|^q \mu(v)  \right]^{\frac{1}{q-1}}  \left[  \sum_{v \in V} \bigg| \frac{K^\sigma_{t/2}(v,u)}{\sqrt{\mu(v)}} \bigg| \mu(v)  \right]^{\frac{q-2}{q-1}}
\end{multline*}

Observe that, using Lemma \ref{lemma:properties heat kernel}.\eqref{lemma:PHK:composition} again
$$
\sum_{v \in V} \bigg| \frac{K^\sigma_{t/2}(v,u)}{\sqrt{\mu(v)}} \bigg|^2 \mu(v) = \sum_{v \in V} \big| K^\sigma_{t/2}(v,u)  \big|^2  = \sum_{v \in V}  K^\sigma_{t/2}(v,u)K^\sigma_{t/2}(u,v) = K^\sigma_t(u,u),
$$
(so in particular $K^\sigma_t(u,u) \ge 0$),
whereas from Theorem \ref{thm:heat-kernel-domination} and Lemma \ref{lemma:properties heat kernel}.\eqref{lemma:PHK:unsigned} we have
$$
\sum_{v \in V} \bigg| \frac{K^\sigma_{t/2}(v,u)}{\sqrt{\mu(v)}} \bigg| \mu(v) = \sum_{v \in V} \big| K^\sigma_{t/2}(v,u) \big| \sqrt{\mu(v)} \le \sum_{v \in V}  K_{t/2}(v,u) \sqrt{\mu(v)} = \sqrt{\mu(u)} .
$$
Therefore,
$$
\frac{d}{dt} K^\sigma_t(u,u) \le - \frac{c_\delta^2}{144d_\mu}  \left( \frac{\delta-2}{\delta-1} \right)^2 K^\sigma_t(u,u)^{2(q-1)/q} \mu(u)^{-(q-2)/q}.
$$
Using the fact that $1-2(q-1)/q = -2/\delta$, we now consider
\begin{align*}
\frac{d}{dt} \left[  K^\sigma_t(u,u)^{1-2(q-1)/q}  \right] &= -\frac{2}{\delta} K^\sigma_t(u,u)^{-2(q-1)/q} \frac{d}{dt} K^\sigma_t(u,u) \\
&\ge \frac{c_\delta^2}{72 \delta d_\mu}  \left( \frac{\delta-2}{\delta-1} \right)^2  \mu(u)^{-(q-2)/q}.
\end{align*}
Integrating,
\begin{align*}
K^\sigma_t(u,u)^{-2/\delta} &\ge \frac{c_\delta^2}{72 \delta d_\mu}  \left( \frac{\delta-2}{\delta-1} \right)^2  \mu(u)^{-(q-2)/q}t + 1\\
&\ge \frac{c_\delta^2}{72 \delta d_\mu}  \left( \frac{\delta-2}{\delta-1} \right)^2  \mu(u)^{-(q-2)/q}t 
\end{align*}
\end{proof}
That is,
$$
K^\sigma_t(u,u) \le C_\delta \frac{ \mu(u) }{ t^{\delta/2}},  \quad \text{where} \quad C_\delta =  \frac{(72 \delta d_\mu)^{\delta/2}}{c_\delta^\delta}  \left( \frac{\delta-1}{\delta-2} \right)^\delta.
$$
Adding up over all $u \in V$, we conclude
$$
\sum_{j=1}^N e^{-\lambda_j t} = \sum_{u \in V} K^\sigma_t(u,u) \le C_\delta \frac{ \vol_\mu(G) }{ t^{\delta/2}}.
$$

As a consequence we give lower bounds for the eigenvalues of the discrete magnetic Laplacian,
which are analogous to Polya's conjecture for Dirichlet eigenvalues of regular domains in $\R^n$ \cite{Polya-Szego}.

\begin{corollary}
Suppose that $(G,\sigma)$ has isoperimetric dimension $\delta>2$ with constant $c_\delta$.
Then the $k$-th eigenvalue $\lambda_k$ of the discrete magnetic Laplacian $\lap_\sigma$ satisfies
$$
\lambda_k \ge C'_\delta \frac{ k }{ \vol_\mu(G) }
$$
where $C'_\delta$ is a constant depending only on $\delta$, $c_\delta$ and $d_\mu$.
\end{corollary}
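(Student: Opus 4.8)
The plan is to deduce the claimed estimate from the heat-trace bound of Theorem~\ref{thm:eigenvalue-bounds} in two stages. First I would extract from it the Polya-type lower bound on $\lambda_k$ with exponent $2/\delta$, following the classical argument of \cite[Sec.~4]{ChungYau95}. Then I would observe that the hypothesis of finite isoperimetric dimension forces the number of vertices $N$ to be at most a constant multiple of $\vol_\mu(G)$, which makes $k/\vol_\mu(G)$ bounded and lets me trade the exponent $2/\delta$ for $1$.

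For the first stage: since $\lambda_1\le\cdots\le\lambda_k$, Theorem~\ref{thm:eigenvalue-bounds} gives, for every $t>0$,
$$
k\,e^{-\lambda_k t}\le\sum_{j=1}^{k}e^{-\lambda_j t}\le\sum_{j=1}^{N}e^{-\lambda_j t}\le C_\delta\,\frac{\vol_\mu(G)}{t^{\delta/2}}.
$$
Writing $t=s/\lambda_k$ and rearranging gives $\lambda_k^{\delta/2}\ge \frac{k}{C_\delta\vol_\mu(G)}\,s^{\delta/2}e^{-s}$ for all $s>0$; the elementary maximum of $s\mapsto s^{\delta/2}e^{-s}$ is $(\delta/2)^{\delta/2}e^{-\delta/2}$, attained at $s=\delta/2$, so
$$
\lambda_k\ge A_\delta\left(\frac{k}{\vol_\mu(G)}\right)^{2/\delta},
$$
where $A_\delta$ depends only on $\delta$, $c_\delta$ and $d_\mu$ (through $C_\delta$).

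For the second stage I would use the a priori bound $\lambda_j\le 2d_\mu$ valid for all $j$: then $N e^{-2d_\mu t}\le\sum_{j=1}^N e^{-\lambda_j t}\le C_\delta\vol_\mu(G)\,t^{-\delta/2}$ for every $t>0$, and minimizing $t^{-\delta/2}e^{2d_\mu t}$ (optimal at $t=\delta/(4d_\mu)$) yields $N\le C''_\delta\,\vol_\mu(G)$ with $C''_\delta$ depending only on $\delta$, $c_\delta$, $d_\mu$. Hence $k/\vol_\mu(G)\le N/\vol_\mu(G)\le C''_\delta$. Since $\delta>2$, the exponent $2/\delta$ is strictly less than $1$, so $x\mapsto x^{2/\delta-1}$ is non-increasing on $(0,\infty)$; applying this at $x=k/\vol_\mu(G)\in(0,C''_\delta]$ gives $x^{2/\delta}\ge (C''_\delta)^{2/\delta-1}x$, and combining with the first stage,
$$
\lambda_k\ge A_\delta\left(\frac{k}{\vol_\mu(G)}\right)^{2/\delta}\ge A_\delta (C''_\delta)^{2/\delta-1}\,\frac{k}{\vol_\mu(G)}=C'_\delta\,\frac{k}{\vol_\mu(G)},
$$
with $C'_\delta:=A_\delta(C''_\delta)^{2/\delta-1}$ depending only on $\delta$, $c_\delta$, $d_\mu$, as required.

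The genuinely delicate point — the one I would double-check most carefully — is the second stage: a bound that is \emph{linear} in $k$ looks at first suspicious, since $\lambda_k\le 2d_\mu$ is bounded while $k$ need not be. The resolution is precisely that finite isoperimetric dimension cannot hold for arbitrarily large graphs of fixed $\mu$-volume, so $k/\vol_\mu(G)$ is automatically controlled in terms of $\delta$, $c_\delta$, $d_\mu$; in this regime the linear bound is in fact weaker than — and follows from — the $2/\delta$ bound. Everything else is the routine Legendre-transform-type optimization of the heat trace.
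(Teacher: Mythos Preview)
Your first stage is exactly the paper's proof: bound $k\,e^{-\lambda_k t}$ by the heat trace from Theorem~\ref{thm:eigenvalue-bounds} and optimize in $t$. The paper in fact stops there, concluding
\[
\lambda_k \ge C'_\delta\Big(\frac{k}{\vol_\mu(G)}\Big)^{2/\delta},
\]
so the corollary as printed is evidently missing the exponent $2/\delta$ (Polya's conjecture, to which the paper compares it, has precisely that exponent).

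Your second stage is a correct additional argument that recovers the statement as literally written. The heat-trace route you use to bound $N/\vol_\mu(G)$ works, though there is also a one-line direct argument: applying Definition~\ref{def:isoperimetric constant} to a singleton $\{u\}$ gives $d_u \ge c_\delta\,\mu(u)^{(\delta-1)/\delta}$, and combining with $d_u \le d_\mu\,\mu(u)$ yields $\mu(u)\ge (c_\delta/d_\mu)^\delta$, hence $N/\vol_\mu(G)\le (d_\mu/c_\delta)^\delta$. Either way your observation is right: in the admissible regime $k/\vol_\mu(G)$ is bounded by a constant in $\delta$, $c_\delta$, $d_\mu$, so the linear-in-$k$ inequality is weaker than, and follows from, the $2/\delta$ bound. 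In short, your proof is correct and even patches what looks like a typo in the stated corollary; the paper itself only establishes (and presumably intended) the sharper $2/\delta$ version.
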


\begin{proof}
It follows from Theorem \ref{thm:eigenvalue-bounds} that
$$
k e^{-\lambda_k t} \le C_\delta \frac{ \vol_\mu(G) }{ t^{\delta/2}} \quad \text{ and thus } \quad k  \le C_\delta \vol_\mu(G) \frac{  e^{\lambda_k t} }{ t^{\delta/2}}.
$$
The function $e^{\lambda_kt}/t^{\delta/2}$ is minimized when $t = \delta/2\lambda_k$, and thus
$$
k \le C_\delta \vol_\mu(G) \bigg( \frac{2\lambda_ke}{\delta} \bigg)^{\delta/2}
$$
which implies
$$
\lambda_k \ge \frac{\delta}{2e} \bigg( \frac{k}{ C_\delta \vol_\mu(G) } \bigg)^{2/\delta} = C_\delta'  \bigg( \frac{k}{ \vol_\mu(G) } \bigg)^{2/\delta} \quad \text{with} \quad C_\delta' = \frac{\delta}{2e C_\delta^{2/\delta}}.
$$
\end{proof}

\section{The $S^1$-valued case}\label{sec:S1-valued}

All of our Sobolev inequalities have versions for the case where the signature is $S^1$-valued.
We start with a version of Lemma \ref{lemma:key}, which in turn is adapted from \cite[Lemma 4.7]{LLPP}.

For any $t\in (0,1]$, we define $X_{t}: \overline{B_1(0)}\to
\C$ by
$$
  X_t(z):=
  \begin{cases}
  z/|z|, & \text{if } z\in \overline{B_1(0)}\setminus B_{t}(0),\\
  0 & \text{if } z \in B_t(0).
\end{cases}
$$

\begin{lemma}\label{lemma:keyS1}
  For any two points $z_1,z_2\in \overline{B_1(0)}$, we have
  \begin{equation}\label{eq:keyS1}
    \int_0^1\left|X_{t}(z_1)-X_{t}(z_2)\right|\, dt\leq 2|z_1-z_2|.
  \end{equation}
\end{lemma}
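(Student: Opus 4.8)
The plan is to follow the proof of Lemma~\ref{lemma:key}, which is shorter in the present case because $X_t$ does no rounding of angles: as soon as $t<|z|$, the value $X_t(z)$ is exactly the phase $z/|z|$ rather than the nearest $k$-th root of unity, so the inner $\theta$-average disappears entirely. First I would dispose of the degenerate case in which $z_1$ or $z_2$ is $0$: if, say, $z_2=0$, then the integrand in $t$ is $\chi_{\{t\le|z_1|\}}$ and the integral equals $|z_1|=|z_1-z_2|$. Otherwise I assume without loss of generality that $|z_1|\ge|z_2|>0$.

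Next, exactly as in Lemma~\ref{lemma:key}, I would evaluate the integrand pointwise in $t$: it equals $\big|\tfrac{z_1}{|z_1|}-\tfrac{z_2}{|z_2|}\big|$ for $t\le|z_2|$, equals $1$ for $|z_2|<t\le|z_1|$, and equals $0$ for $t>|z_1|$. Integrating over $(0,1]$ gives
$$
\int_0^1\big|X_t(z_1)-X_t(z_2)\big|\,dt=|z_2|\cdot\big|\tfrac{z_1}{|z_1|}-\tfrac{z_2}{|z_2|}\big|+\big(|z_1|-|z_2|\big),
$$
so everything reduces to bounding this expression by $2|z_1-z_2|$.

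The second summand is at most $|z_1-z_2|$ by the reverse triangle inequality. For the first, I would write it as $|P-z_2|$, where $P:=\tfrac{|z_2|}{|z_1|}\,z_1$ is the point of the segment $[0,z_1]$ having the same modulus as $z_2$. The one genuine step is the elementary observation that the triangle with vertices $z_1,P,z_2$ is non-acute at $P$: since $P-z_1=\tfrac{|z_2|-|z_1|}{|z_1|}\,z_1$ is a nonpositive real multiple of $z_1$, a one-line computation gives
$$
\Re\big[(P-z_1)\,\overline{(P-z_2)}\big]=\frac{|z_2|-|z_1|}{|z_1|}\big(|z_1|\,|z_2|-\Re(z_1\overline{z_2})\big)\le 0,
$$
the last factor being nonnegative by Cauchy--Schwarz. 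By Pythagoras this yields $|P-z_2|^2+\big(|z_1|-|z_2|\big)^2\le|z_1-z_2|^2$, so in particular $|P-z_2|\le|z_1-z_2|$; adding the two bounds establishes~\eqref{eq:keyS1} (in fact one gets the better constant $\sqrt2$).

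The only potential obstacle is this sign computation, which is routine. If one prefers to avoid the complex-number bookkeeping, the same bound $|P-z_2|\le|z_1-z_2|$ follows by noting that $r\mapsto\big|r\,e^{i\arg z_1}-z_2\big|^2=r^2-2r|z_2|\cos\theta+|z_2|^2$, with $\theta=\arg z_1-\arg z_2$, has derivative $2r-2|z_2|\cos\theta\ge0$ for $r\ge|z_2|$, hence is nondecreasing on $[|z_2|,|z_1|]$.
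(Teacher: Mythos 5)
Your proof is correct and follows essentially the same route as the paper: compute the integral as $|z_2|\,\bigl|\tfrac{z_1}{|z_1|}-\tfrac{z_2}{|z_2|}\bigr| + (|z_1|-|z_2|)$ and bound each summand by $|z_1-z_2|$. You add two small touches the paper omits: a self-contained justification (non-acute angle at $P$ plus Pythagoras) of the inequality $\bigl|\tfrac{z_1}{|z_1|}|z_2|-z_2\bigr|\le|z_1-z_2|$, which the paper treats as an observation, and the remark that the Pythagorean form of that bound actually yields the sharper constant $\sqrt{2}$ in place of $2$.
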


\begin{proof}
  Without loss of generality, we may assume that $|z_1|\ge|z_2|>0$. Observe that
  \begin{multline*}
    \int_0^1\left|X_{t}(z_1)-X_{t}(z_2)\right|\, dt
    \leq \left|\frac{z_1}{|z_1|}-\frac{z_2}{|z_2|}\right||z_2|+\big(|z_1|-|z_2|\big) \\
=     \left|\frac{z_1}{|z_1|}|z_2|- z_2 \right|+\big(|z_1|-|z_2|\big) \le 2 | z_1 - z_2 |.
  \end{multline*}
\end{proof}

With Lemma \ref{lemma:keyS1} in hand, we can now follow exactly the same strategies of proof to get versions of Theorems \ref{thm:Sobolev-p=1}, \ref{thm:Sobolev}, \ref{thm:Sobolev-p=1-Cheeger} and
\ref{thm:Sobolev-Cheeger},
and of Corollaries \ref{cor:variational-characterization-isoperimetric-constant} and \ref{cor:variational-characterization-Cheeger}
for $S^1$-valued signatures: the only change comes from the factor of $3$ coming from Lemma \ref{lemma:key} being improved to a $2$ thanks to Lemma \ref{lemma:keyS1}.

\section{Relationship to Balian-Low type theorems in finite dimensions}\label{sec:BL}

The original inspiration for this paper was to provide a possible method for proving the finite Balian-Low conjecture from \cite{Lammers-Stampe}.
The conjecture has recently been proved by Nitzan and Olsen \cite{Nitzan-Olsen-finite-BL}, based on methods previously developed by the same authors in \cite{Nitzan-Olsen}.
Though it is possible to give a proof for the conjecture using the language we have developed here of Sobolev inequalities, the proof is essentially the same as that of \cite{Nitzan-Olsen-finite-BL} and therefore we will only briefly sketch how their proof can be interpreted in our language.

By applying the finite Zak transform \cite[Defn. 2.3]{Nitzan-Olsen-finite-BL}, the expression appearing in the Balian-Low conjecture looks very much like one side of a classical Sobolev inequality on the discrete torus, except that certain complex numbers of modulus one make an appearance (due to the properties of the Zak transform as in \cite[Lemma 2.4.(i)]{Nitzan-Olsen-finite-BL}).
Thus, this expression can be interpreted as one half of a Sobolev inequality on a \emph{magnetic} discrete torus.
To finish the proof, we would then need lower bounds for an isoperimetric constant for the aforementioned magnetic discrete torus.
This is provided by \cite[Lemma 3.1]{Nitzan-Olsen-finite-BL}, which is also a fundamental step in the proof of Nitzan and Olsen.

\section*{Acknowledgements}

The author thanks Profs. William B. Johnson and Keri Kornelson for helpful discussions on the subject of this paper.

\bibliography{sob-references}
\bibliographystyle{amsalpha}

\end{document}